\let\@fnsymbol\@arabic
\theoremstyle{plain}
\newtheorem{theorem}{\bf Theorem}[section]
\newtheorem{corollary}[theorem]{Corollary}
\newtheorem{lemma}[theorem]{Lemma}
\newtheorem{proposition}[theorem]{Proposition}
\newtheorem{question}[theorem]{Question}
\theoremstyle{definition}
\newtheorem{definition}[theorem]{Definition}
\newtheorem{example}{\bf Example}
\newtheorem{thmnonumber}{\bf Theorem}
\newtheorem{cornonumber}[thmnonumber]{\bf Corollary}
\newcommand{\Z}{\mathbb{Z}}
\newcommand{\PP}{\mathbb{P}}
\newcommand{\CC}{\mathbb{C}}
\newcommand{\Tor}{\operatorname{Tor} }
\newcommand{\Proj}{\operatorname{Proj} }
\newcommand{\pp}{\mathfrak{p}}
\newcommand{\mm}{\mathfrak{m}}
\newcommand{\Lyu}{\mathfrak{L}}
\renewcommand{\O}{\mathcal{O}}
\newcommand{\reg}{\operatorname{reg} }
\definecolor{mypink}{RGB}{215, 5, 234}
\begin{document}
\author{
Bruno Benedetti \thanks{Supported by NSF Grant 1600741, `Geometric Combinatorics and Discrete Morse Theory'} \\
\small Dept. of Mathematics\\ \small University of Miami\\
\footnotesize \url{bruno@math.miami.edu}
\and
Michela Di Marca \thanks{Supported by PRIN 2010S47ARA 003, `Geometria delle Variet\`a Algebriche.'} \\
\small Dip. di Matematica\\ \small Universit\`{a}  di Genova\\
\footnotesize \url{dimarca@dima.unige.it}
\and
Matteo Varbaro \thanks{Supported by PRIN 2010S47ARA 003, `Geometria delle Variet\`a Algebriche.'} \\
\small Dip. di Matematica\\ \small Universit\`{a}  di Genova\\
\footnotesize \url{varbaro@dima.unige.it}
}
\title{Regularity of line configurations}
 \date{October 2, 2017}
\maketitle

\begin{flushright} \em \footnotesize
Dedicated to the memory of Tony Geramita \\ \vskip4mm
\end{flushright}

\begin{abstract}
We show that in arithmetically-Gorenstein line arrangements with only planar singularities, each line intersects the same number of other lines. This number has an algebraic interpretation: it is the Castelnuovo--Mumford regularity of the coordinate ring of the arrangement. %

We also prove that every $(d-1)$-dimensional simplicial complex whose 0-th and 1-st homologies are trivial is the nerve complex of a suitable $d$-dimensional standard graded algebra of depth $\ge 3$. This provides the converse of a recent result by Katzman, Lyubeznik and Zhang.
\end{abstract}

\section*{Introduction}
The study of lines on smooth surfaces of $\PP^3$ has a fascinating history. Lines on smooth cubics were investigated in the Nineteenth century by Cayley \cite{Ca}, Salmon \cite{Sa}, and Clebsch \cite{Cl}, among others. Every smooth cubic contains exactly 27 lines, whose pattern of intersection is independent of the chosen cubic. In 1910 Schoute showed that the 27 lines can be put into a one-to-one correspondence with the vertices of a $6$-dimensional polytope, so that all incidence relations between the lines are mirrored in the combinatorial structure of the polytope \cite{Schoute} \cite{DuVal}. 

The situation changes drastically for surfaces of larger degree. In fact, the generic surface of degree $d \ge 4$ contains no line at all. However, special smooth surfaces do contain lines (they are forced to be in a finite number whenever $d\geq 3$ though). An example of a smooth quartic containing 64 lines was found in 1882 by Schur \cite{Schur}:
\[x_0^4-x_0x_1^3=x_2^4-x_2x_3^3.\]
In 1943 B.~Segre \cite{Se} claimed that a smooth quartic may contain at most 64 lines, which is precisely the number achieved by Schur's quartic. Segre's proof, however, was based on the erroneous assumption that a line on a smooth quartic can meet at most 18 other lines on it. In 2015 the work of Rams and Sch\"utt \cite{RS} exhibited smooth quartics in which some line actually meets 20 other lines. Using a deeper argument, however, Rams and Sch\"utt  were able to salvage Segre's conclusion that indeed no smooth quartic contains more lines than Schur's.

We are naturally intrigued by the symmetry that many of these line configurations on smooth surfaces seem to enjoy. For example: 
\begin{compactitem}
\item[(i)] each line on a smooth cubic meets exactly $10$ of the others;
\item[(ii)] each line on the Schur quartic meets exactly $18$ of the others;
\item[(iii)] each line on the degree-$d$ Fermat surface meets exactly $4d-2$ of the others.
\end{compactitem} 
Where does this regularity come from? Schoute's ``polytopal bijection'' offers an explanation only for (i). The main result of the present paper provides a general, two-line answer to this question.

\begin{thmnonumber}[Theorem \ref{thm:Main}] \em \label{thm:MainThm}
In any arithmetically-Gorenstein line arrangement $X \subseteq \PP^n$ where all singularities are planar, each line intersects exactly $\reg X -1$ of the other lines. 
\end{thmnonumber}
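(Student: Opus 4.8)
The plan is to compute, for each line $L$ of the arrangement, the degree of the dualizing sheaf $\omega_X$ restricted to $L$ in two ways, and to read off $d_L := \#\{\text{lines of } X \text{ meeting } L\}$ from the comparison. Since $X$ is arithmetically Gorenstein and one-dimensional, its homogeneous coordinate ring $A$ is a two-dimensional graded Gorenstein ring, so the canonical module satisfies $\omega_A \cong A(a)$ for the $a$-invariant $a = a(A)$; sheafifying, $\omega_X$ is the invertible sheaf $\mathcal{O}_X(a)$. Moreover $\reg X - 1 = a + 2$, relating the regularity of the arrangement to the $a$-invariant of its coordinate ring. The first computation is then immediate and uniform: restricting the line bundle $\mathcal{O}_X(a)$ to a line $L \cong \PP^1$ gives $\omega_X|_L \cong \mathcal{O}_{\PP^1}(a)$, of degree $a$.

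The second computation is local at the points where $L$ meets the rest of $X$, and this is where planarity is essential. Let $\nu \colon \tilde X \to X$ be the normalization; as every component is a line, $\tilde X$ is a disjoint union of copies of $\PP^1$. Because $X$ is Gorenstein, relative duality for the finite map $\nu$ yields $\nu_* \omega_{\tilde X} \cong \mathfrak{c}\cdot \omega_X$, with $\mathfrak{c} \subseteq \mathcal{O}_X$ the conductor. Restricting to the component $L$ therefore gives $\omega_L \cong (\omega_X|_L)\big(-\sum_p c_p\, p\big)$, the sum running over the singular points $p \in L$, where $c_p$ is the order of vanishing of $\mathfrak{c}$ along $L$ at $p$. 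The crucial local claim is that $c_p = m_p - 1$, with $m_p$ the number of lines through $p$. Planarity forces the $m_p$ branches at $p$ to be coplanar, so analytically $X$ near $p$ is a plane curve, namely $m_p$ concurrent coplanar lines; in particular it is a hypersurface singularity, hence Gorenstein, in harmony with the global hypothesis. For such an ordinary planar $m_p$-fold point the conductor equals $\mathfrak{m}_p^{\,m_p - 1}$, of vanishing order exactly $m_p - 1$ on each branch, as one checks by adjunction on the local plane curve (where $\omega \cong \mathcal{O}(m_p - 3)$ pins down the coefficient). Without planarity this fails: three concurrent non-coplanar lines in $\PP^3$ already give a non-Gorenstein point.

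Comparing the two computations on $L$ gives $-2 = \deg \omega_L = a - \sum_{p \in L}(m_p - 1)$. Since two distinct lines meet in at most one point, $\sum_{p \in L}(m_p - 1)$ counts exactly the lines meeting $L$, i.e.\ equals $d_L$. Hence $d_L = a + 2 = \reg X - 1$ for every line $L$; constancy is automatic, as the right-hand side is independent of $L$.

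I expect the main obstacle to be the rigorous justification of the local coefficient $c_p = m_p - 1$ and the correct handling of the restriction of $\omega_X$ to the (non-Cartier) component $L$: routing everything through the normalization and the conductor is precisely what makes the planarity hypothesis usable and keeps the local bookkeeping honest. As an independent check, summing $d_L = a + 2$ over all $s$ lines and comparing with $p_a(X) = \sum_p \binom{m_p}{2} - s + 1$ together with the symmetry of the Gorenstein $h$-vector recovers the same value, though only on average --- so it is the dualizing-sheaf computation above, carried out one line at a time, that actually delivers the stated uniformity.
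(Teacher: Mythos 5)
Your proof is correct, but it takes a genuinely different route from the paper's. The paper argues algebraically: writing $I=\bigcap_{i=1}^s\pp_i$ and $J=\bigcap_{i=1}^{s-1}\pp_i$, it uses planarity to show that the scheme $L_s\cap(L_1\cup\dots\cup L_{s-1})$ is a degree-$d$ divisor on $L_s\cong\PP^1$ (the multiplicity at a point where $m$ lines meet is exactly $m-1$ precisely because the branches are coplanar), then transports the resulting nonvanishing $\Tor_n^S(S/(J+\pp_s),K)_{n-1+d}\neq 0$ into $\Tor_{n-1}^S(S/I,K)_{n-1+d}\neq 0$ via the Mayer--Vietoris sequence, using liaison (Cohen--Macaulayness of the linked scheme $\operatorname{Proj}(S/J)$) to kill the obstructing terms; this yields $\reg(S/I)\ge d$, and the reverse inequality comes from the $\reg(S/I)$-connectivity of the dual graph proved in \cite{BV}. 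You instead get the equality in one stroke from $\deg\omega_{\PP^1}=-2=a-\sum_{p\in L}(m_p-1)$, routing adjunction through the normalization and Rosenlicht's conductor formula $\nu_*\omega_{\widetilde X}\cong\mathfrak{c}\,\omega_X$ for Gorenstein curves. Note that planarity enters both arguments through essentially the same local fact: your conductor exponent $c_p=m_p-1$ at an ordinary planar $m_p$-fold point is exactly the paper's statement that $J+\pp_s=\pp_s+(g)$ with $\deg g=d$. What your approach buys is independence from the connectivity theorem of \cite{BV} and a cleaner conceptual explanation of where the number $\reg X-1=a+2$ comes from; what it costs is heavier machinery (duality for the finite morphism $\nu$ and the identification of the dualizing sheaf of an arithmetically Gorenstein scheme with $\O_X(a)$), and it does not recover the $(\reg X-1)$-connectivity of $G_X$ asserted in the full Theorem \ref{thm:Main} --- though that part is not in the statement you were asked to prove. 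The one step you should spell out more carefully is the passage from $\nu_*\omega_{\widetilde X}\cong\mathfrak{c}\,\omega_X$ to the component-wise formula $\omega_L\cong(\omega_X|_L)(-\sum_p c_p\,p)$, which uses that $\mathfrak{c}$ is also an ideal of $\nu_*\O_{\widetilde X}$; this is routine but not free.
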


The planarity of all singularities is necessary: Without it, we can only claim that each line intersects \emph{at least} $\reg X -1$ of the other lines, cf.~\cite[Theorem 3.8]{BV}. But curves on a smooth surface have by definition only planar singularities. In particular, if a line arrangement $X \subseteq \PP^3$ is the complete intersection of a smooth surface of degree $d$ and another surface (not necessarily smoooth) of degree $e$, Theorem \ref{thm:MainThm} reveals that each line of the arrangement must intersect exactly $d + e -2$ of the other lines. This, as we will see in the subsection on line arrangements in $\PP^3$, explains cases (i), (ii) and (iii) above.
 
From Theorem \ref{thm:MainThm} one can easily infer the following:

\begin{cornonumber}[Corollary~\ref{cor:VeryAmple}]
Let $X$ be a smooth surface, $H$ a very ample divisor, $X\subseteq \PP^n$ the embedding given by the complete linear system $|H|$. Let $D_1,\ldots ,D_s$ be lines on $X\subseteq \PP^n$ such that $D_1+\ldots +D_s \sim dH$ for some $d\in \mathbb{Z}_{>0}$.
\begin{compactitem}
\item[{\rm (i)}] If $n=3$, then $|\{j\neq i:D_j\cap D_i\neq \emptyset\}|=\deg H+d-2  $ for each $ i=1,\ldots , s$.
\item[{\rm (ii)}] If $X$ is a $K3$ surface, then $|\{j\neq i:D_j\cap D_i\neq \emptyset\}|=d+2 $ for each $i=1,\ldots , s$.
\end{compactitem}
\end{cornonumber}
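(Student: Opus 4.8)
The plan is to derive Corollary~\ref{cor:VeryAmple} from the Main Theorem (Theorem~\ref{thm:MainThm}) by reducing both statements to a computation of $\reg X$ for the given embedding. Since $X$ is a smooth surface, any curve lying on $X$ has only planar singularities, so the line arrangement $D_1 + \cdots + D_s$ automatically satisfies the hypothesis on singularities in the Main Theorem. Thus, provided I can check that this arrangement is arithmetically Gorenstein, the Main Theorem immediately tells me that each line $D_i$ meets exactly $\reg X - 1$ of the other lines, independently of $i$. The whole problem then becomes: identify $\reg X$ in terms of the geometric data $\deg H$ and $d$ (for part (i)), and in terms of $d$ alone (for part (ii), the $K3$ case).

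\medskip
First I would set up the arithmetically-Gorenstein property. The key point is that the divisor $Z = D_1 + \cdots + D_s \sim dH$ is cut out on $X$ by a hypersurface of degree $d$ in $\PP^n$ (via the very ample embedding by $|H|$). In case (i), $X \subseteq \PP^3$ is a surface of degree $\deg H$, and $Z$ is the complete intersection of $X$ with a degree-$d$ surface; complete intersections of codimension $2$ in $\PP^3$ are arithmetically Cohen--Macaulay and in fact arithmetically Gorenstein, so the Main Theorem applies. In case (ii), the $K3$ hypothesis forces the canonical class $K_X$ to be trivial, and I would invoke adjunction to show that the linear system $Z \sim dH$ cut on the surface yields an arithmetically-Gorenstein configuration: triviality of $K_X$ is exactly what is needed for the coordinate ring of the curve $Z$ to be Gorenstein.

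\medskip
The central computation is the Castelnuovo--Mumford regularity of the one-dimensional scheme $X$ (the line configuration, i.e.\ the curve $Z$) in each case. For a complete intersection in $\PP^3$ of two surfaces of degrees $a = \deg H$ and $b = d$, the coordinate ring is the quotient by a regular sequence of degrees $a$ and $b$, and its regularity is the well-known value $\reg = a + b - 1$. Substituting into $\reg X - 1$ gives $\deg H + d - 2$, which is exactly the count in part (i). For the $K3$ case, I would compute the regularity of the curve $Z \sim dH$ on a $K3$ surface using the adjunction/Riemann--Roch machinery: the genus and degree of $Z$, together with the vanishing of the relevant cohomology coming from $K_X = 0$, should pin down $\reg Z = d + 3$, so that $\reg X - 1 = d + 2$, matching part (ii).

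\medskip
I expect the main obstacle to be part (ii), the $K3$ case. Here neither the arithmetically-Gorenstein property nor the regularity value is as transparent as for the complete-intersection situation of part (i): I cannot simply read off a regular sequence, and I must instead argue via the trivial canonical bundle and the cohomology of line bundles on $X$. Concretely, the hard step is to translate ``$X$ is $K3$'' and ``$Z \sim dH$'' into the precise sheaf-cohomological vanishings that simultaneously (a) establish that the homogeneous coordinate ring of $Z$ is arithmetically Gorenstein and (b) evaluate $\reg Z = d + 3$. Once those cohomological computations are in place, both parts follow mechanically from the Main Theorem.
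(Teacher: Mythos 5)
Your overall strategy is the same as the paper's: planarity of singularities is automatic on a smooth surface, so everything reduces to checking that the arrangement $Y=D_1\cup\ldots\cup D_s$ is arithmetically Gorenstein and computing its regularity, after which Theorem \ref{thm:Main} gives the count. Part (i) of your argument is complete and matches the paper: $Y$ is the complete intersection of $X$ with a degree-$d$ surface, and the Koszul resolution gives $\reg Y-1=\deg H+d-2$.

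Part (ii), however, contains a genuine gap, and it sits exactly where you say the difficulty is. The crux is not adjunction or Riemann--Roch on the curve $Z$: it is showing that the \emph{surface} $X\subseteq\PP^n$ is arithmetically Cohen--Macaulay. For that you need two inputs you do not identify: (a) projective normality of the embedding by $|H|$, which for $K3$ surfaces is a theorem of Saint-Donat \cite{SD} and is not automatic for an arbitrary complete linear system; and (b) the vanishing $H^1(X,\O_X(k))=0$ for all $k\in\Z$, which the paper derives from $H^1(X,\O_X)=0$, the hyperplane-section exact sequence, and Serre duality with $K_X\sim 0$. Projective normality is also what guarantees that a section of $\O_X(d)$ cutting out $Z$ lifts to a degree-$d$ hypersurface of $\PP^n$, i.e.\ that $Y$ is scheme-theoretically $X$ intersected with a hypersurface --- a step you use implicitly. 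Once $R_X$ is Cohen--Macaulay, $K_X\sim 0$ makes it Gorenstein with $\omega_{R_X}\cong R_X$, hence of regularity $4$ (in the ideal convention), and cutting with a degree-$d$ form gives $\reg Y=d+3$, so each line meets $d+2$ others. Your phrase ``triviality of $K_X$ is exactly what is needed for the coordinate ring of $Z$ to be Gorenstein'' overstates the role of $K_X$: without the ACM property of $R_X$, which is the real content, triviality of the canonical class gives you nothing about the homogeneous coordinate ring.
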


Theorem \ref{thm:MainThm} can also be viewed as a step forward in the problem of computing the Castelnuovo-Mumford regularity of a subspace arrangement, studied among others by Derksen--Sidman \cite{DS}.

The second section of the present paper is devoted to the related problem of understanding the geometry of a simplicial complex that is attached to any standard graded $\mathbb{C}$-algebra $R$, called the {\it Lyubeznik complex} or the {\it nerve complex} of $R$, and here denoted by $\mathfrak{L}(R)$. A recent result by Katzman, Lyubeznik and Zhang \cite{KLZ} states that when $\operatorname{depth } R \ge 3$ one has $\widetilde{H}_0(\Lyu(R);\mathbb{C})=\widetilde{H}_1(\Lyu(R);\mathbb{C})=0$. We show the converse:

\begin{thmnonumber}[Theorem \ref{thm:Lyubeznik}] \em
For any $(d-1)$-dimensional simplicial complex $\Delta$ with $\widetilde{H}_0(\Delta;\mathbb{C})=\widetilde{H}_1(\Delta;\mathbb{C})=0$, there exists a $d$-dimensional standard graded $\mathbb{C}$-algebra $R$ such that $\operatorname{depth } R \ge 3$ and $\Lyu(R)=\Delta$. 
\end{thmnonumber}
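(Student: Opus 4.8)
The plan is to realize $\Delta$ geometrically as the nerve of a union of linear subspaces of a projective space, and then to read off $\operatorname{depth}R$ from the local cohomology of the resulting coordinate ring, so that the hypotheses $\widetilde{H}_0(\Delta;\CC)=\widetilde{H}_1(\Delta;\CC)=0$ become exactly the vanishing needed in low degrees. Recall that $\Lyu(R)$ depends only on the minimal primes $\pp_1,\dots,\pp_s$ of $R$: its vertices are $1,\dots,s$, and a set $\sigma$ is a face precisely when the projective subvarieties $V(\pp_i)$, $i\in\sigma$, have a common point (equivalently $\height(\sum_{i\in\sigma}\pp_i)<\dim S$). Hence it suffices to produce linear subspaces $L_1,\dots,L_n\subseteq\PP^N$, with $n$ the number of vertices of $\Delta$, whose incidence pattern is exactly $\Delta$, i.e. $\bigcap_{i\in\sigma}L_i\neq\emptyset$ if and only if $\sigma\in\Delta$, and to set $R=\CC[L_1\cup\dots\cup L_n]$. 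As long as the $L_i$ are distinct and inclusion-incomparable, the $I(L_i)$ are precisely the minimal primes of $R$, and then $\Lyu(R)=\Delta$ by construction.

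For the realization I would work facet by facet. For every facet $F$ of $\Delta$ I fix a point $p_F\in\PP^N$, and for every vertex $i$ I let $L_i$ be a linear subspace of dimension $d-1$ containing $\{p_F:i\in F\}$. Choosing $N$ large and the configuration $\{p_F\}$ as generic as the prescribed incidences allow, one arranges that the only common points of the $L_i$ are the points $p_F$, so that a subset $\sigma$ of vertices satisfies $\bigcap_{i\in\sigma}L_i\neq\emptyset$ exactly when $\sigma$ is contained in a common facet, that is, exactly when $\sigma\in\Delta$. Taking every $L_i$ of dimension $d-1$ makes $X=\bigcup_i L_i$ equidimensional with $\dim R=d$. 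A genuinely delicate point already appears here: a vertex lying in more than $d$ facets forces the corresponding points $p_F$ into a non-generic position (they must span a subspace of dimension at most $d-1$), so the configuration cannot be fully generic, and one must check by an inductive placement or a parameter count that a configuration satisfying all the coplanarity constraints imposed by the vertices — and no accidental extra incidences — actually exists.

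To compute the depth I would use the Mayer--Vietoris (Čech--type) spectral sequence attached to the closed cover $X=\bigcup_i L_i$,
\[
E_1^{\,p,q}=\bigoplus_{\substack{\sigma\in\Delta\\ |\sigma|=p+1}}H^q_{\mm}\big(\CC[L_\sigma]\big)\ \Longrightarrow\ H^{p+q}_{\mm}(R),\qquad L_\sigma=\bigcap_{i\in\sigma}L_i,
\]
whose terms are the local cohomology modules of the coordinate rings of the intersections $L_\sigma$ (the empty intersections, i.e. the non-faces of $\Delta$, contribute zero automatically), and whose horizontal differentials are the simplicial coboundary maps of $\Delta$. Since each $L_\sigma$ is a linear space, $\CC[L_\sigma]$ is Cohen--Macaulay and $H^q_{\mm}(\CC[L_\sigma])$ is concentrated in the single degree $q=\dim L_\sigma+1$. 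The key step is then to show that, for the generic configuration above, the total-degree $\le 2$ part of the abutment is computed entirely by the reduced cohomology of $\Delta$: the degree-$0$ strata of $H^1_{\mm}(R)$ and $H^2_{\mm}(R)$ recover $\widetilde{H}^0(\Delta;\CC)$ and $\widetilde{H}^1(\Delta;\CC)$ through the nerve differentials, while general position kills every other graded stratum in those degrees. Granting this, $H^0_{\mm}(R)=H^1_{\mm}(R)=H^2_{\mm}(R)=0$ as soon as $\widetilde{H}_0(\Delta;\CC)=\widetilde{H}_1(\Delta;\CC)=0$ (which over a field is the same as the vanishing of $\widetilde{H}^0$ and $\widetilde{H}^1$), and since $\operatorname{depth}R=\min\{i:H^i_{\mm}(R)\neq 0\}$ this gives $\operatorname{depth}R\ge 3$.

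The hard part will be precisely this last identification. One must control that the higher-dimensional intersections $L_\sigma$ do not inject spurious classes into $H^i_{\mm}(R)$ for $i\le 2$, proving that in total degree at most $2$ the spectral sequence collapses onto the low reduced cohomology of $\Delta$; this is where the hypotheses $\widetilde{H}_0=\widetilde{H}_1=0$ are genuinely consumed, and where the converse of the Katzman--Lyubeznik--Zhang implication is actually established. The combinatorial--geometric realization with $\dim R$ equal to $d$ (the coplanarity issue noted above) is a secondary, but still non-routine, obstacle that must be handled in tandem, since the depth computation relies on the pieces $L_\sigma$ being linear and on the cover faithfully reproducing $\Delta$.
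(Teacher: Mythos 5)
Your overall strategy is genuinely different from the paper's: you realize $\Delta$ as the nerve of an arrangement of $(d-1)$-dimensional \emph{linear} subspaces in a large $\PP^N$ and try to compute $H^i_{\mm}(R)$ by a \v{C}ech-type spectral sequence, whereas the paper realizes $\Delta$ by taking $n$ generic hyperplanes in $\PP^d$ and iteratively blowing up the intersections indexed by the minimal non-faces (ordered by decreasing dimension), then gets the depth bound from Bj\"orner's extended nerve lemma ($H^i(Y;\CC)\cong H^i(\Delta;\CC)$ for $i\le 1$, since the strata are smooth rational, hence simply connected), surjectivity of $H^i(Y;\CC)\to H^i(Y;\O_Y)$ for simple normal crossings, Kodaira vanishing in negative degrees, and a high Veronese re-embedding to kill positive degrees. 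Unfortunately your version has a fatal flaw at its central step. The spectral sequence you write down, with $E_1^{p,q}=\bigoplus_{|\sigma|=p+1}H^q_{\mm}(\CC[L_\sigma])$ abutting to $H^{p+q}_{\mm}(R)$, cannot be correct: each $\CC[L_\sigma]$ is a polynomial ring, so $H^q_{\mm}(\CC[L_\sigma])$ is concentrated in the top cohomological degree and in strictly \emph{negative} internal degrees, and $H^0_{\mm}$ of these domains vanishes. Hence your $E_1$ page is identically zero in internal degree $0$, which would force $H^i_{\mm}(R)_0=0$ for all $i$ and every subspace arrangement --- contradicting, e.g., two skew lines in $\PP^3$, where $\Delta$ is two disjoint points and $H^1_{\mm}(R)_0\cong\widetilde{H}^0(\Delta;\CC)=\CC$. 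The underlying reason is that the Mayer--Vietoris complex $\bigoplus_i\CC[L_i]\to\bigoplus_{i<j}\CC[L_{ij}]\to\cdots$ is \emph{not} a resolution of $R$ for a general subspace arrangement; in internal degree $0$ it is exactly the augmented simplicial cochain complex of $\Delta$, and its failure of exactness --- i.e., $\widetilde{H}^*(\Delta;\CC)$ --- is precisely the contribution you need to see in $H^1_{\mm}(R)_0$ and $H^2_{\mm}(R)_0$. So the step you flag as ``the hard part'' is not merely unproved: the tool you propose to prove it with is set up so that the relevant classes cannot appear, and repairing it (a two-spectral-sequence or mapping-cone argument mixing the cohomology of the non-exact complex with the local cohomology of its terms) is essentially the whole content of the theorem. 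You also never address the positive internal degrees of $H^1_{\mm}(R)$ and $H^2_{\mm}(R)$; ``general position'' does not kill them, and the paper needs the Veronese trick for exactly this.

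The realization step is a second, independent gap. You need $(d-1)$-planes $L_i$ each containing all the points $p_F$ with $i\in F$, while keeping $\bigcap_{i\in\sigma}L_i=\emptyset$ for every non-face $\sigma$. When a vertex lies in more than $d$ facets, the corresponding points are forced into special position, and these coplanarity constraints interact across vertices; you acknowledge this but give no inductive placement or dimension count showing a configuration with no accidental incidences exists, and it is not clear that one always does. The paper's blow-up construction sidesteps this entirely: one starts from hyperplanes in $\PP^d$ (whose nerve is the full simplex) and surgically removes exactly the unwanted intersections, so realizability is automatic. As it stands, your argument establishes neither the existence of the arrangement nor the cohomological identification, so it does not yet constitute a proof.
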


Our construction has the advantage of being compatible with the dual graph notion, in the sense that the dual graph of $R$ is simply the $1$-skeleton of $\Delta$.

\subsection*{Notation}
Let $K$ be a field, and $X$ a projective scheme over $K$. Fix a closed embedding $X\subseteq \PP^n$, and let $I_X\subseteq K[x_0,\ldots ,x_n]=:S$ the corresponding saturated ideal. Let $R_X:=S/I_X$ be the corresponding coordinate ring. We say that $X\subseteq \PP^n$ is {\it arithmetically-Gorenstein} if $R_X$ is Gorenstein. Furthermore, the {\it Castelnuovo-Mumford regularity} of $X\subseteq \PP^n$ is 
\[\reg X:=\max\{j-i:\Tor_i^S(I_X,K)_j\neq 0\}.\]
We say that $X\subseteq \PP^n$ is a {\it subspace arrangement} if $I_X=\bigcap_{i=1}^s\pp_i$ where the $\pp_i$'s are generated by linear forms. We say that $X\subseteq \PP^n$ is a {\it line arrangement} if it is a subspace arrangement and $\dim S/\pp_i=2$ for all $i=1,\ldots ,s$.

\enlargethispage{2mm}
By the {\it dual graph} of $X$, denoted by $G_X$, we mean the simple graph whose vertices are the $s$ irreducible components  $X_1,\ldots ,X_s$ and whose edges are
\[\{\{i,j\}:\dim X_i\cap X_j = \dim X-1\}\]
The {\em valency} of a vertex is the number of vertices adjacent to it. (This is usually called ``degree'' in graph theory, but we refrain from this notation to avoid confusion.)  We denote by $\delta(G_X)$ the minimum valency of a vertex. By $\Delta(G_X)$ we denote the maximum valency. If all vertices have the same valency (say, $k$), we say that the graph $G_X$ {\em is regular} (or \emph{has regularity $k$}). 
The graph $G_X$ is {\em $k$-connected} if it has at least $k+1$ vertices and the deletion of less than $k$ vertices from $G_X$, however chosen, yields a connected graph. 
It is easy to see that if $G_X$ is $k$-connected, then  $\delta(G_X) \ge k$. The \emph{distance} of two vertices is the smallest number of edges of a path connecting them. The maximum distance in a graph is called {\em (graph) diameter}. It is well known that any $k$-connected graph with $n$ vertices has diameter at most $\lfloor \frac{n+k-2}{k}\rfloor$.

\section{The two notions of regularity coincide}
Let us start by showing that the dual graph of an arithmetically-Gorenstein line arrangement need not be regular. Recall that by \cite[Theorem 3.8]{BV} we have the inequality 
\[\reg X -1  \le \delta (G_X)  \le \Delta (G_X).\]

\begin{example} \label{prp:ex1} 

Let $I \subset \mathbb{Q}[x,y,z,t,w]$ be the following ideal:
\[ I = (x^2-y^2+z^2-t^2,\: xz-yt, \: xw). \]
With the help of the software { Macaulay2} \cite{M2}, one can immediately verify that $I$ is the intersection of the following $8$ prime ideals.
\[ \!\!
\begin{array}{llll}
\pp_1=(t,y-z,x) & \:\pp_3=(z-t,y,x) &  \:\pp_5=(w,z-t,x-y) & \:\pp_7=(w,y+z,x+t)\\
 \pp_2=(t,y+z,x) & \:\pp_4=(z+t,y,x) &\:\pp_6=(w,z+t,x+y)  &\:\pp_8=(w,y-z,x-t)
\end{array}
\]
Hence, $I$ defines an arrangement $X\subset \PP^4$ of 8 lines. One can check that the dual graph is
\[G_X=  12, 13, 14, 18, 23, 24, 27, 34, 35, 46, 57, 58, 67, 68.
\]
While the first four vertices have valency $4$, the remaining four vertices have valency $3$. Being a complete intersection of three quadrics, $X$ has regularity 4. In conclusion, $\delta(G_X)=3$ and $\Delta(G_X)=\reg X=4$.
\end{example}

The graph of Example \ref{prp:ex1} is ``very close'' to being $3$-regular. In fact, it has minimum valency equal to $\reg X -1$ and maximum valency equal to $\reg X$. 
This triggers two questions: 
\begin{compactenum}[(1)]
\item Is perhaps $\delta (G_X)$ always equal to $\reg X -1$? 
\item Is there an upper bound on how large the gap $\Delta(G_X) -  \reg X$ can be?
\end{compactenum}
Our next step is to provide a negative answer to both questions. It turns out that even when the dual graph of $X$ is regular, the graph-theoretic regularity of $G_X$ may be {\em arbitrarily larger} then the Castelnuovo--Mumford regularity of $X$.

\begin{proposition} For any integers $n, d \ge 2$, there is a complete intersection $X\subseteq \PP^{n+1}$ of Castelnuovo--Mumford regularity $nd -n+1$ whose dual graph is  $(d^n-1)$-regular and $(d^n-1)$-connected.
\end{proposition}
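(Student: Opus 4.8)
The plan is to realize the dual graph as the \emph{complete} graph $K_{d^n}$: a graph on $d^n$ vertices is $(d^n-1)$-regular if and only if it is $K_{d^n}$, and likewise it is $(d^n-1)$-connected only when it is $K_{d^n}$ (removing all but two non-adjacent vertices would otherwise disconnect it). Since the vertices of $G_X$ are the irreducible components of $X$, and a reduced complete intersection of $n$ forms of degree $d$ in $\PP^{n+1}$ that splits into lines has $\deg X = d^n$ components, it suffices to produce such a complete intersection consisting of $d^n$ distinct lines that pairwise intersect. The cleanest way to force $d^n$ lines to meet pairwise is to make them all pass through a single point, which suggests taking a \emph{cone}.

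Concretely, first I would choose $d^n$ distinct reduced points in $\PP^{n}$ cut out as a complete intersection, for instance
\[ Z = V(y_1^d-y_0^d,\ \ldots,\ y_n^d-y_0^d) \subseteq \PP^n, \]
whose points are the $[1:\zeta_1:\cdots:\zeta_n]$ with each $\zeta_i$ a $d$-th root of unity. Then, viewing $\PP^n = \{y_{n+1}=0\}$ as a hyperplane in $\PP^{n+1}$ and letting $p = [0:\cdots:0:1]$, I would take $X$ to be the cone over $Z$ with vertex $p$. Since none of the $n$ defining forms involves $y_{n+1}$, the same forms generate $I_X$ in $S = K[y_0,\ldots,y_{n+1}]$; as $y_1^d-y_0^d,\ldots,y_n^d-y_0^d$ remain a regular sequence after adjoining the variable $y_{n+1}$, the scheme $X$ is again a complete intersection of $n$ hypersurfaces of degree $d$. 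Geometrically $X$ is the union of the $d^n$ distinct lines joining $p$ to the points of $Z$, so it is a line arrangement with $d^n$ components.

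It then remains to verify the two numerical claims. The regularity is immediate from the standard complete-intersection formula: resolving $S/I_X$ by the Koszul complex on $n$ forms of degree $d$ gives $\reg(S/I_X)=n(d-1)$, hence $\reg X = n(d-1)+1 = nd-n+1$. For the dual graph, any two of the lines share the vertex $p$ and are otherwise disjoint, so $\dim(X_i\cap X_j) = 0 = \dim X - 1$ for every pair $i\neq j$; thus every pair of components forms an edge and $G_X = K_{d^n}$, which is $(d^n-1)$-regular and $(d^n-1)$-connected. I do not anticipate a serious obstacle: the only points requiring (routine) care are that the chosen $Z$ really is a reduced complete intersection of $d^n$ distinct points and that coning preserves the complete-intersection property, both standard. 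The interest of the statement lies not in the difficulty of the construction but in the conclusion it yields, namely that the gap $\Delta(G_X)-\reg X = d^n - (nd-n+2)$ between graph regularity and Castelnuovo--Mumford regularity is unbounded, in sharp contrast with the lower bound $\reg X - 1 \le \delta(G_X)$ recalled above.
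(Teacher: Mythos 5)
Your proof is correct and follows essentially the same route as the paper: take a complete intersection of $d^n$ points in $\PP^n$ and cone it to $\PP^{n+1}$, so that all lines pass through the vertex and the dual graph is $K_{d^n}$, while the Koszul resolution gives the regularity. The only (harmless) difference is that you use the explicit Fermat-type equations $y_i^d-y_0^d$ where the paper takes generic forms of degree $d$.
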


\begin{proof}
Let $f_1,\ldots ,f_n$ be generic homogeneous polynomials 
of degree $d$ 
in $K[x_0,\ldots ,x_n]$. Let $Y\subseteq \PP^n$ be the projective scheme defined by the ideal $J=(f_1,\ldots ,f_n)$. Since the $f_i$'s are generic, the ideal $J$ is a radical complete intersection, so $Y\subseteq \PP^n$ consists of 
$d^n$
distinct points. Consider the cone $X\subseteq \PP^{n+1}$ of $Y$. This is an arrangement of 
$d^n$ 
lines in $\PP^{n+1}$. Being a complete intersection, $X$ is arithmetically-Gorenstein and 
$\reg X=nd -n+1$.
Since all lines in $X$ pass through $[0,\ldots ,0,1]\in\PP^{n+1}$, the dual graph of $X$ is the complete graph on  
$d^n$ vertices.
\end{proof}

We are now ready for our main result. 

\begin{definition}[Planar singularities] A curve $(X,\mathcal{O}_X)$ over $K$ has \textit{only planar singularities} if \[\widehat{\mathcal{O}_{X,P}}\cong K[[x,y]]/I_P \;\:\textrm{ for all } P \in X \ \textrm{ and for some }I_P\subseteq K[[x,y]].\] 
\end{definition}

For line arrangements in $\mathbb{P}^n$, this boils down to the following condition: whenever three or more lines come together at a single point, all those lines must belong to the same plane. Of course, every line arrangement where \emph{no} three lines meet at a common point, yields an example of a curve with only planar singularities. 
By definition, any curve lying on a smooth surface has only planar singularities.

\begin{theorem} \label{thm:Main}
Let $X \subseteq \PP^n$ be an arithmetically-Gorenstein line arrangement with only planar singularities. The dual graph $G_X$ is $(\reg X -1)$-regular and $(\reg X -1)$-connected.
\end{theorem}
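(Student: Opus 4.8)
The plan is to complement the quoted inequality $\reg X -1 \le \delta(G_X)\le \Delta(G_X)$ with a matching \emph{upper} bound on valencies. The lower bound already gives $\delta(G_X)\ge \reg X -1$, so regularity will follow the moment I show that no line meets \emph{more} than $\reg X -1$ of the others, i.e.\ $\Delta(G_X)\le \reg X -1$: the two bounds then pinch every valency to $\reg X -1$. The $(\reg X-1)$-connectivity does \emph{not} use planarity at all; it is precisely the arithmetically-Gorenstein connectivity estimate underlying the quoted bound (a $k$-connected graph has $\delta\ge k$), so I would invoke it directly, as in \cite{BV}, and concentrate the whole effort on regularity.

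Fix one line $L=X_i$, write $I_L=\pp_i$ and $I_{X'}=\bigcap_{j\neq i}\pp_j$ for the residual arrangement $X'$, so that $I_X=I_L\cap I_{X'}$. First I would turn the combinatorial count into a scheme-theoretic degree: I claim that \emph{under planarity} the valency of $L$ equals $\deg(L\cap X')$, the length of the zero-dimensional scheme $S/(I_L+I_{X'})$. This is a purely local assertion at each point $P\in L$ through which other lines pass. If $a$ further lines meet $L$ at $P$, planarity forces them and $L$ into a single plane; in affine coordinates $(u,v)$ there with $L=\{v=0\}$ and the other lines $\{v=c_ku\}$ (the $c_k$ distinct and nonzero), the local equation $\prod_k(v-c_ku)$ of $X'$ restricts on $L$ to $\prod_k(-c_ku)$, i.e.\ to $u^{a}$ up to a unit, so the intersection multiplicity at $P$ is exactly $a$. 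Summing over the multiple points of $L$ yields $\deg(L\cap X')=\mathrm{valency}(L)$. Without coplanarity this multiplicity would only be $\ge a$, which is exactly why \cite{BV} gives merely an inequality.

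Next I would compute $\deg(L\cap X')$ by Gorenstein liaison. Since $L$ is a complete intersection, $R_L\cong K[y_0,y_1]$ is arithmetically Cohen--Macaulay, and $I_{X'}=I_X:I_L$ exhibits $X'$ as linked to $L$ through the Gorenstein ideal $I_X$; in particular liaison keeps $X'$ Cohen--Macaulay of the same dimension. The standard liaison isomorphism reads
\[ I_{X'}/I_X\;\cong\;\Hom_{R_X}(R_L,R_X)\;\cong\;\omega_{R_L}(-a), \]
with $a=a(R_X)$ the $a$-invariant, using $R_X\cong\omega_{R_X}(-a)$ together with $\Hom_{R_X}(R_L,\omega_{R_X})\cong\omega_{R_L}$ (equal dimensions force $\Ker=0$ and no higher $\Ext$). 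As $\omega_{R_L}\cong R_L(-2)$ and, $R_X$ being Gorenstein of dimension $2$, $a=\reg R_X-2=\reg X-3$, this collapses to
\[ I_{X'}/I_X\;\cong\;R_L(-(\reg X-1)). \]
Now $I_{X'}/I_X\cong (I_{X'}+I_L)/I_L$ is an ideal of the two-variable polynomial ring $R_L$ that is \emph{free of rank one}; such an ideal is principal, generated by a single form $g$, and the twist shows $\deg g=\reg X-1$. Hence $L\cap X'$ is cut out on $L\cong\PP^1$ by one form of degree $\reg X-1$, so $\deg(L\cap X')=\reg X-1$. Combined with the previous paragraph this gives $\mathrm{valency}(L)=\reg X-1$ for every line $L$, proving $(\reg X-1)$-regularity.

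The two delicate points are these. The conceptual heart is the planar local computation: the identity $\mathrm{valency}(L)=\deg(L\cap X')$ is exactly where planarity is indispensable, since at a non-planar node the restricted ideal picks up extra length. The second is the exact grading bookkeeping in the liaison isomorphism — coaxing the single twist $-(\reg X-1)$ to emerge — which rests on the precise conversions $\reg X=\reg R_X+1$ and $\reg R_X=a(R_X)+2$ for the two-dimensional Gorenstein ring $R_X$. The remaining ingredients (reducedness, the identity $I_{X'}:I_L=I_{X'}$, and the fact that liaison preserves Cohen--Macaulayness) are routine, so I expect the length computation to be the genuine obstacle and the twist-chasing to be the main place one can slip.
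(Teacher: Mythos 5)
Your proof is correct, and it isolates the same two pressure points as the paper's argument, but it resolves the second one by a genuinely different mechanism. The shared half is the planarity computation: the paper likewise realizes $L\cap X'$ as the divisor of a single form $g$ on $L\cong \PP^1$ and uses coplanarity at each multiple point to show that $\deg g$ equals the valency of $L$ (its $g=\prod_i q_i^{a_i}$, with $a_i$ the number of other lines through $P_i$, is exactly your unit-times-$u^a$ local calculation), and both arguments take the $(\reg X-1)$-connectivity straight from \cite{BV}. Where you diverge is in pinning down $\deg g$. The paper never computes it directly: from $\pp_s+(g)$ it extracts a nonzero $\Tor_n^S(S/(J+\pp_s),K)_{n-1+\deg g}$, pushes it through the sequence $0\to S/I\to S/J\oplus S/\pp_s\to S/(J+\pp_s)\to 0$ --- using liaison only to the extent that the residual $S/J$ is Cohen--Macaulay, so the top Tor of the middle term vanishes --- to conclude $\reg(S/I)\ge \deg g$, and then closes the loop with the $\reg(S/I)$-connectivity of $G_X$, which forces the reverse inequality. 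You instead invoke the full graded liaison duality
\[ I_{X'}/I_X\;\cong\;\Hom_{R_X}(R_L,R_X)\;\cong\;\omega_{R_L}(-a(R_X))\;\cong\;R_L(-(\reg X-1)), \]
and your twist bookkeeping ($\omega_{R_L}\cong R_L(-2)$, $a(R_X)=\reg R_X-2=\reg X-3$) is correct, so you get $\deg g=\reg X-1$ on the nose. Your route buys independence of the regularity claim from the connectivity theorem (which is then needed only for the connectivity assertion) and exhibits the valency as a canonical-module degree; the paper's route buys a softer use of liaison, at the price of needing the connectivity bound to upgrade an inequality to an equality.
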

\begin{proof} Choose an arbitrary ordering $L_1, \ldots, L_s$ of the lines of $X$. Let $d$ be the number of lines intersecting the last line $L_s$. We are going to show that $d=\reg X -1$, whence the conclusion follows by reshuffling the order. 

Let $I=\bigcap_{i=1}^s\pp_i\subseteq S=K[x_0,\ldots ,x_{n}]$ be the defining ideal of $X\subseteq \PP^n$. Set $J=\bigcap_{i=1}^{s-1}\pp_i$. 
The ideal $J+\pp_s$ defines the scheme $L_s\cap (L_1\cup L_2 \cup \ldots \cup L_{s-1})$, which is possibly not reduced, and whose underlying topological space consists of $k\leq d$ points of $L_s$.
Hence we can find a polynomial $g\in S$ of degree $h$, where $k\leq h \leq d$, such that
\[J+\pp_s=\pp_s+(g).\]
Note that $h=d$: in fact, $g=\prod_{i=1}^kq_i^{a_i}$, where $q_i$ is the equation of $P_i$ in $L_s$; since $P_i$ is a planar singularity of $X$, the number $a_i$ counts the lines of $X$ that are different from $L_s$ and pass through~$P_i$.
In particular, $\Tor_n^S(S/(J+\pp_s),K)_{n-1+d}\neq 0$. 

But  $\Proj(S/J)$ is geometrically linked to $L_s$ by $X$; it follows that $S/J$ is a 2-dimensional Cohen--Macaulay ring, and in particular 
\begin{equation} \label{eq:1}
\Tor_n^S(S/J,K)=\Tor_n^S(S/\pp_s,K)=0.
\end{equation} 
 Now, consider the short exact sequence
\[0\rightarrow S/I\rightarrow S/J\oplus S/\pp_s\rightarrow S/(J+\pp_s)\rightarrow 0.\]
Applying Tor and using equation (\ref{eq:1}), we see that there is an injection
\[\Tor_n^S(S/(J+\pp_s),K)_{n-1+d} \: \hookrightarrow \: \Tor_{n-1}^S(S/I,K)_{n-1+d}.\]
So also $\Tor_{n-1}^S(S/I,K)_{n-1+d} \ne 0$. By definition of Castelnuovo--Mumford regularity, we obtain
\[\reg(S/I) \ge d.\]
On the other hand, the graph $G(X)$ is $\reg(S/I)$-connected by \cite[Theorem 3.8]{BV}. In any $k$-connected graph, no vertex can have less than $k$ neighbors. 
This means that $\reg(S/I) \le d$. Hence $d=\reg(S/I) = \reg X - 1$. 
\end{proof}

\begin{corollary} \label{cor:VeryAmple}
Let $X$ be a smooth surface, $H$ a very ample divisor, $X\subseteq \PP^n$ the embedding given by the complete linear system $|H|$, and $D_1,\ldots ,D_s$ lines on $X\subseteq \PP^n$ such that $D_1+\ldots +D_s \sim dH$ for some $d\in \mathbb{Z}_{>0}$: 
\begin{compactitem}
\item[{\rm (i)}] If $n=3$, then $|\{j\neq i:D_j\cap D_i\neq \emptyset\}|=\deg H+d-2  $ for each $ i=1,\ldots , s$.
\item[{\rm (ii)}] If $X$ is a $K_3$ surface, then $|\{j\neq i:D_j\cap D_i\neq \emptyset\}|=d+2 $ for each $i=1,\ldots , s$.
\end{compactitem}
\end{corollary}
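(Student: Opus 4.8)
The plan is to apply our Main Theorem (Theorem~\ref{thm:Main}) to the line arrangement $C := D_1 \cup \cdots \cup D_s \subseteq \PP^n$. Two hypotheses must be verified: that $C$ has only planar singularities, and that $C$ is arithmetically-Gorenstein. The first is immediate, since $C$ lies on the smooth surface $X$ and, by definition, every curve on a smooth surface has only planar singularities. Granting the second, Theorem~\ref{thm:Main} yields that each $D_i$ meets exactly $\reg C - 1$ of the other lines, so the whole problem reduces to computing $\reg C - 1$ in the two situations.

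To see that $C$ is arithmetically-Gorenstein, I would first argue that the surface $X \subseteq \PP^n$ is itself arithmetically-Gorenstein. In case (i) this is automatic: $X \subseteq \PP^3$ is a hypersurface, hence a complete intersection, and $\deg H = \deg X$. In case (ii), $X$ is a $K3$ surface; its intermediate cohomology $H^1(X,\O_X(j))$ vanishes for every $j$ (combining $H^1(\O_X)=0$, Kodaira vanishing, and Serre duality via $\omega_X\cong\O_X$), while the embedding by the complete very ample system $|H|$ is projectively normal by Saint-Donat's theorem; together these give that $R_X$ is arithmetically Cohen--Macaulay, and since $\omega_X\cong\O_X$ the ring $R_X$ is in fact Gorenstein with $a$-invariant $0$. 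In either case, because $R_X$ is arithmetically Cohen--Macaulay the restriction map $H^0(\PP^n,\O(d)) \to H^0(X,\O_X(d))$ is surjective, so the divisor $C \in |dH|$ is cut out on $X$ by a single form $F$ of degree $d$. As $X$ is integral, $R_X$ is a domain and $F$ is a nonzerodivisor; hence $R_C = R_X/(F)$ is the quotient of a Gorenstein ring by a regular element, and is therefore Gorenstein, establishing the second hypothesis.

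It then remains to compute $\reg C - 1$, which by the proof of Theorem~\ref{thm:Main} equals $\reg(R_C)$; for the Cohen--Macaulay ring $R_C$ of Krull dimension $2$ one has $\reg(R_C) = a(R_C) + 2$, where $a(\cdot)$ denotes the $a$-invariant. Cutting by the degree-$d$ nonzerodivisor $F$ shifts the $a$-invariant by $d$, so $a(R_C) = a(R_X) + d$; and for an arithmetically Cohen--Macaulay surface with $K_X \sim tH$ one reads off $a(R_X) = t$ directly from the identification $(\omega_{R_X})_j = H^0(X,\omega_X(jH))$. In case (i), adjunction in $\PP^3$ gives $K_X \sim (\deg H - 4)H$, hence $\reg C - 1 = (\deg H - 4 + d) + 2 = \deg H + d - 2$; in case (ii), $K_X \sim 0$ gives $\reg C - 1 = d + 2$, as claimed.

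The step I expect to be the main obstacle is the verification that $X$ is arithmetically-Gorenstein in the $K3$ case, which rests on the projective normality of very ample $K3$ embeddings and on the vanishing of all intermediate cohomology; in case (i) this is trivial. The remaining ingredients — surjectivity of the restriction map for arithmetically Cohen--Macaulay $X$, the identity $\reg = a + \dim$ for Cohen--Macaulay graded rings, and the shift $a(R/(F)) = a(R) + \deg F$ under a hypersurface section — are standard and should present no real difficulty.
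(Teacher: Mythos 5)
Your proposal is correct and follows essentially the same route as the paper: reduce to Theorem~\ref{thm:Main} by checking planar singularities and the arithmetically-Gorenstein property, obtain the latter by showing $X\subseteq\PP^n$ is arithmetically Cohen--Macaulay (trivially in case (i); via Saint-Donat's projective normality plus vanishing of $H^1(X,\O_X(k))$ in case (ii)) and then cutting by a degree-$d$ form, and finally read off the regularity. The only differences are cosmetic: you invoke Kodaira vanishing, which ties the $K3$ case to characteristic $0$, whereas the paper gives a characteristic-free induction using the hyperplane-section sequence, and you make the final regularity count explicit through $a$-invariants and adjunction where the paper simply states the values.
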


\begin{proof}
Consider the line arrangement $Y:=\bigcup_{i=1}^sD_i\subseteq \PP^n$. Being contained in a smooth surface, it has only planar singularities. If $n=3$, then $Y\subseteq \PP^3$ is the scheme-theoretic intersection of $X$ and some surface of degree $d$. In particular, it is arithmetically-Gorenstein of Castelnuovo--Mumford regularity $\deg H+d-2$. This settles part (i). 

For (ii): By definition the embedding $X\subseteq \PP^n$ is linearly normal. So it is projectively normal by results of Saint-Donat \cite{SD}. 
We claim that $H^1(X,\O_X(k))=0$ for all $k\in\Z$. Since $K_X\sim 0$ and $H^1(X,\O_X)=0$, in characteristic 0 this claim would immediately follow by the Kodaira vanishing and Serre's duality. However one can easily argue in any characteristic, by considering the following short exact sequence:
\[0\rightarrow \O_X(k-1)\rightarrow \O_X(k)\rightarrow \O_H(k)\rightarrow 0.\]
Because $H^0(X,\O_X(k))\cong H^0(H,\O_H(k))$ for all $k\leq 0$ and $H^0(X,\O_X(k))=0$ for all $k<0$, the map
\[H^1(X,\O_X(k-1))\rightarrow H^1(X,\O_X(k))\]
is injective for all $k\leq 0$. Since $H^1(X,\O_X)=0$ we therefore infer by induction that $H^1(X,\O_X(k))=0$ for all $k\leq 0$. Since $K_X\sim 0$, by Serre's duality we obtain that $H^1(X,\O_X(k))=0$ for all $k\in\Z$, and the claim is proven. Hence, the embedding $X\subseteq \PP^n$ is arithmetically Cohen-Macaulay. Yet $K_X\sim 0$, so the embedding is also arithmetically-Gorenstein of (Castelnuovo--Mumford) regularity 4. Since $Y\subseteq \PP^n$ is scheme-theoretically the intersection of $X$ with a hypersurface of degree $d$ in $\PP^n$, it is arithmetically-Gorenstein of regularity $d+3$.
\end{proof}

\subsection*{Applications to line arrangements in $\mathbb{P}^3$}
Having codimension $2$, for curves in $\mathbb{P}^3$ the properties of being a complete intersection and that of being arithmetically-Gorenstein are the same. Before digging into examples, we would like to clarify in what sense it is restrictive for a line arrangement to live in a three-dimensional ambient space. 

\begin{lemma}\label{lem:diameter}
Let $X\subseteq \PP^3$ be a line arrangement. If $X$ is the complete intersection of two surfaces of degree $d$ and $e$ respectively, then the diameter of $G_X$ is at most $\min\{d,e\}$.  
\end{lemma}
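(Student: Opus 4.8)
The plan is to reduce the statement to two facts: the high connectivity of $G_X$ guaranteed by the arithmetically-Gorenstein structure, and the general bound on the diameter of a $k$-connected graph recalled in the notation section. Since both ingredients are already available, the work is to assemble them and to carry out an elementary final estimate.

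First I would record the relevant numerical invariants of $X$. Since $X\subseteq\PP^3$ is the complete intersection of two surfaces of degrees $d$ and $e$, it is arithmetically-Gorenstein, and the Koszul resolution of $S/I_X$ immediately gives $\reg(S/I_X)=d+e-2$, that is $\reg X -1 = d+e-2$. Moreover $X$ is a reduced curve of degree $de$ by B\'ezout, and since it is a union of distinct lines it has exactly $de$ irreducible components. Thus $G_X$ is a graph on $n=de$ vertices.

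Next I would invoke \cite[Theorem 3.8]{BV}, exactly as in the proof of Theorem \ref{thm:Main}: the dual graph of an arithmetically-Gorenstein line arrangement is $\reg(S/I_X)$-connected. I stress that this step needs no assumption on the singularities of $X$ (planarity was used in Theorem \ref{thm:Main} only to bound the regularity \emph{from above}), so it applies to our complete intersection without any smoothness hypothesis. Hence $G_X$ is $k$-connected with $k=d+e-2$, and applying the diameter estimate for $k$-connected graphs on $n$ vertices yields
\[ \diam(G_X)\ \le\ \left\lfloor \frac{n+k-2}{k}\right\rfloor\ =\ \left\lfloor \frac{de+d+e-4}{d+e-2}\right\rfloor. \]

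It remains to check that this floor is at most $\min\{d,e\}$, which is elementary. Assuming without loss of generality that $d\le e$, and using $d+e-2>0$, the inequality $\frac{de+d+e-4}{d+e-2}<d+1$ is equivalent, after clearing denominators, to $0<(d-1)^2+1$, which always holds; therefore the floor is at most $d=\min\{d,e\}$. The argument is short, and the only points requiring genuine care are the computation of $\reg(S/I_X)$ and of the number of components, together with the observation that the connectivity result of \cite{BV} is available here with no planarity assumption; the reduction to $\min\{d,e\}$ is then a one-line arithmetic verification. (The bound is in fact often strict, e.g.\ it gives diameter $\le 2$ when $d=e=3$.)
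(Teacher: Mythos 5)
Your proof is correct and follows the same route as the paper: invoke \cite[Theorem 3.8]{BV} to get that $G_X$ is $(d+e-2)$-connected on $de$ vertices, then apply the diameter bound $\lfloor (n+k-2)/k\rfloor$ for $k$-connected graphs. The paper simply asserts that this quantity is at most $\min\{d,e\}$, whereas you carry out the (correct) arithmetic verification explicitly; your side remarks on the regularity of a complete intersection and on planarity being irrelevant to the connectivity statement are also accurate.
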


\begin{proof}
By \cite[Theorem 3.8]{BV}, $G_X$ is a $(d+e-2)$-connected graph on $de$ vertices. Hence the diameter is at most
$\lfloor \frac{de + (d+e-2) - 2}{d+e-2}\rfloor$, a quantity never larger than $\min \{d,e \}$.
\end{proof}

\begin{proposition} A graph $G$ is the dual graph of a line arrangement in $\mathbb{P}^n$, for some $n \ge 3$, if and only if it is the dual graph of some line arrangement in $\PP^3$. However, some graphs are dual to complete intersection line arrangements $Y\subseteq \PP^n$ for some $n \ge 4$, but cannot be realized as dual graph of any complete intersection line arrangement $X\subseteq \PP^3$.
\end{proposition}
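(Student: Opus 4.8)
The plan is to handle the two assertions separately. For the equivalence, the implication ``if realizable in $\PP^3$ then in some $\PP^n$'' is immediate: a linear embedding $\PP^3\hookrightarrow\PP^n$ carries a line arrangement in $\PP^3$ to one in $\PP^n$ with the same dual graph. For the converse, given a line arrangement $X=L_1\cup\cdots\cup L_s\subseteq\PP^n$ with $n\ge 4$, I would project down to $\PP^3$ from a general center, lowering the dimension one unit at a time. First I would check that a general point $p\in\PP^n$ avoids three families of proper closed subsets: each line $L_i$ (so that $L_i$ still maps isomorphically to a line); each plane spanned by an intersecting pair $L_i,L_j$ (so that no two lines collapse onto one another); and each $3$-plane $\Sigma_{ij}=\langle L_i,L_j\rangle$ spanned by a disjoint pair (so that no new incidence is created). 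These are finitely many subsets, all of dimension $<n$, so a general $p$ lies outside their union. Projecting from such a $p$ then preserves the dual graph exactly: an intersection point of $L_i$ and $L_j$ maps to a common point of the images, while for a disjoint pair the images can meet only if $p$ lies on a line transversal to both, which would force $p\in\Sigma_{ij}$. Iterating $n-3$ times lands in $\PP^3$ with a line arrangement whose dual graph is still $G$. The one step I would write out with care, and which I expect to be the main obstacle, is precisely that the incidence relation is \emph{exactly} preserved at each projection; this is where the dimension count on the $\Sigma_{ij}$ is essential.

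For the second assertion I would exhibit an explicit graph. Take six general linear forms $H_1,H_1',H_2,H_2',H_3,H_3'$ on $\PP^4$ and set
\[X=V(H_1H_1',\,H_2H_2',\,H_3H_3')\subseteq\PP^4.\]
Being cut out by three forms and of dimension $1$, $X$ is a complete intersection of three quadrics, and it decomposes as the union of the $2^3=8$ lines $L_\varepsilon=V(H_1^{(\varepsilon_1)},H_2^{(\varepsilon_2)},H_3^{(\varepsilon_3)})$ indexed by $\varepsilon\in\{0,1\}^3$, each obtained by intersecting one hyperplane from every pair (here $H_i^{(0)}=H_i$, $H_i^{(1)}=H_i'$). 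A short general-position computation shows that $L_\varepsilon$ and $L_{\varepsilon'}$ meet precisely when $\varepsilon$ and $\varepsilon'$ differ in a single coordinate: intersecting the four relevant hyperplanes in $\PP^4$ yields one point, whereas differing in two or three coordinates forces five or six general hyperplanes with empty common locus. Hence $G_X$ is the graph of the $3$-cube $Q_3$, a $3$-regular graph on $8$ vertices with diameter $3$.

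It remains to rule out a complete-intersection realization of $Q_3$ in $\PP^3$. If $Y\subseteq\PP^3$ were such a realization, of bidegree $(d,e)$, then by Bézout it would consist of $de=8$ lines, so $(d,e)\in\{(2,4),(1,8)\}$ up to order. The degenerate case $(1,8)$ produces eight coplanar lines, whose dual graph is the complete graph $K_8\ne Q_3$; and for $(2,4)$ Lemma~\ref{lem:diameter} bounds $\diam G_Y$ by $\min\{d,e\}=2<3$, contradicting $\diam Q_3=3$. Thus $Q_3$ is the dual graph of a complete-intersection line arrangement in $\PP^4$ but of none in $\PP^3$, which settles the proposition. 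I would stress that one \emph{cannot} instead argue through regularity via Theorem~\ref{thm:Main}, since a hypothetical $\PP^3$ realization of $Q_3$ need not have planar singularities and so might fail to be $(\reg Y-1)$-regular; it is exactly because the bound of Lemma~\ref{lem:diameter} rests only on the connectivity estimate of \cite{BV}, and hence holds unconditionally, that the diameter is the right invariant to separate the two settings.
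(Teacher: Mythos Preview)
Your proof is correct and follows essentially the same route as the paper. For the first assertion you project from a general center, avoiding lines, planes of intersecting pairs, and $3$-planes of skew pairs; the paper packages exactly these loci as the secant variety of $Y$ (which has dimension $\le 3$) and projects from a point outside it, but the content is identical. For the second assertion your example $V(H_1H_1',H_2H_2',H_3H_3')\subseteq\PP^4$ is literally the paper's example $(a_1b_1,a_2b_2,a_3b_3)$, and you both rule out a $\PP^3$ realization via the factorizations of $8$ together with the diameter bound of Lemma~\ref{lem:diameter}; your separate treatment of the $(1,8)$ case is not needed, since that lemma already gives diameter $\le 1$ there, but it does no harm.
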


\begin{proof}
Let $Y\subseteq \PP^n$ be a line arrangement such that $G=G_Y$. The secant variety $Z$ of $Y\subseteq \PP^n$ has dimension at most 3, so if $n \ge 4$ there is a point $P\in\PP^n\setminus Z$. Denoting by $\pi$ the projection from $P$ to some $\PP^3\subsetneq \PP^n$ not containing $P$, then $X:=\pi(Y)\subseteq \PP^3$ is a line arrangement isomorphic to $Y$. This settles the first part of the claim. However, the complete intersection property is not preserved under projections. In fact, consider
\[I=(a_1b_1, \ a_2b_2, \ a_3b_3)=\bigcap_{\sigma\subseteq \{1,2,3\}}(a_i, \ b_j:i\in \sigma,j\notin \sigma)\subseteq S=K[x_0,\ldots ,x_4],\]
where $a_i$ and $b_i$ are linear forms of $S$ such that $\dim_K\langle a_i, \ b_j:i\in \sigma,j\notin \sigma\rangle=3$ for all $\sigma\subseteq \{1,2,3\}$. Clearly, $I$ defines an arrangement of 8 lines in $\PP^4$ which is a complete intersection. Note that its dual graph has diameter $3$. Now, suppose by contradiction that some complete intersection line arrangement $X\subseteq \PP^3$ had the same dual graph. Then $X$ would consist of 8 lines. Let $f, g$ be the two homogeneous polynomials such that $(f,g)=I_X$. From the fact that $\deg f \cdot \deg g = 8$ we infer that $\min\{\deg(f),\deg(g)\}$ is either $1$ or $2$. Yet the diameter of $G_X$ is $3$, which contradicts Lemma \ref{lem:diameter}. 
\end{proof}

We are now ready to discuss a few famous examples of line arrangements in $\mathbb{P}^3$. For some of them, the property of being a complete intersection is not at all obvious. For this reason and for didactical purposes, we examine each example in detail. 

\begin{example}(Lines From Two Rulings)
The complete bipartite graph 
$K_{m,n}$ is the dual graph of a line arrangement. This can be realized in any smooth quadric in $\mathbb{P}^3$, by picking $m$ lines from one of the two rulings of the quadric, and then by picking $n$ further lines from the other ruling. The resulting arrangement $A\subseteq \mathbb{P}^3$ was studied by Geramita--Weibel \cite{GW} and later by Teitler--Torrance \cite{TT}, who computed the Castelnuovo-Mumford regularity and classified the arithmetical Cohen--Macaulayness of $A$ in terms of $m$ and $n$. Assuming $m,n \ge 3$, $A$ is arithmetically Cohen--Macaulay if and only if the integers $m$ and $n$ differ by $0$ or $1$ \cite[Theorem 1.2]{TT}.

We claim that $A$ is a complete intersection if and only if $m=n$. The ``only if'' part follows from Theorem \ref{thm:Main} (or from a direct argument). For the ``if'' part: when $m=n$, the arrangement $A$ is the complete intersection of the quadric with a union of $n$ planes. Since any edge in the dual graph corresponds to a pair of coplanar lines, the choice of the $n$ planes correspond to the choice of a complete matching of the graph.
\end{example}

\begin{example}[Twenty-Seven Lines] \label{ex:27}
Any smooth cubic surface $Y$ in $\mathbb{P}^3$ contains exactly $27$ lines. Any such cubic $Y$ is isomorphic to the blow-up of the projective plane $\mathbb{P}^2$ along $6$ points $P_1,\ldots ,P_6$. The $27$ lines can be described as follows:
\begin{compactitem}
\item[(a)] the exceptional divisor $E_i$ corresponding to $P_i$, with $i \in \{1,\ldots ,6\}$ (for a total of 6 lines of this type);
\item[(b)] the strict transform $L_{ij}$ of the line in $\mathbb{P}^2$ that passes through $P_i$ and $P_j$, with $i,j \in \{1,\ldots ,6\}$ and $i <j$ (which yields a total of $15$ further lines);
\item[(c)] the strict transform $C_i$ of the unique conic in $\mathbb{P}^2$ that passes through all points except $P_i$, with $i \in \{1,\ldots ,6\}$ ($6$ further lines).
\end{compactitem} 
Let $B$ be the line arrangement given by the $6+15+6=27$ lines and let $G_B$ be the dual graph of $B$. Consistently with the above notation, we denote by $E_i, L_{ij}$ and $C_i$ the vertices of $G_B$. Then, by construction, $G_B$ consists of the following edges:
\begin{compactitem}
\item $\{E_i,L_{hk}\}$ if and only if $i=h$ or $i=k$;
\item $\{L_{ij},C_k\}$ if and only if $i=k$ or $j=k$;
\item $\{E_i,C_j\}$ if and only if $i \neq j$.
\item $\{L_{ij},L_{hk}\}$ if and only if $\{i,j\}\cap\{h,k\}=\emptyset$.
\end{compactitem}
It is straightforward to check that $G_B$ is a $10$-connected $10$-regular graph. Theorem \ref{thm:Main} confirms this fact. Indeed, $B$ has only planar singularities and is the complete intersection of the cubic with the union of $9$ planes. (Each triangle in $G_B$ corresponds to a plane, the one containing the three lines, and $G_B$ can be partitioned into $9$ triangles.)
\end{example}

\begin{example}[Steiner set] A Steiner set is a line arrangement given by $3$ sets of $3$ lines each, where each line is incident with $2$ from each of the other two sets. 
In the notation of  Example \ref{ex:27}, let $G_B$ be the dual graph of an arrangement $B$ of 27 lines on some smooth cubic $Y$. Let  $G_C$ be the subgraph of $G_B$ induced on the following vertices:
\[\{E_1,E_2,E_3\} 
\: \: \cup \: \: 
\{L_{ij} | \,1\leq i<j \leq 3\}
\: \: \cup \: \: 
\{C_1,C_2,C_3\}.\]
This $G_C$ is $4$-regular, it has $9$ vertices and  $18$ edges. The subarrangement $C$ of $B$ dual to $G_C$ is a Steiner set. 
We can partition $G_C$ in 3 triangles given by the triples of lines:
\[\{E_1,L_{12},C_2\}, \ \{E_2,L_{23},C_3\}, \ \{E_3,L_{13},C_1\}.\]
Each triple of lines lies on a plane, so the line arrangement $C$ lies on the union $Z$ of the three planes. So we have $C\subseteq Y\cap Z$, and for degree reasons the inclusion must be an equality. Thus $C\subseteq \PP^3$ is a complete intersection of the smooth cubic $Y$ and a union $Z$ of three planes.  Theorem \ref{thm:Main} claims in fact that $G_C$ is $(3+3-2)$-regular.
\end{example}

\begin{example}[Schl\"afli's double-six] \label{ex:Schlaefli}
Let $G_D$ be the bipartite graph on $\{a_1,\ldots ,a_6\}\cup\{b_1,\ldots ,b_6\}$ such that $\{a_i,b_j\}$ is an edge if and only if $i\neq j$. Clearly $G_D$ is a $5$-regular graph. Schl\"afli's double-six is a line arrangement $D\subseteq \mathbb{P}^3$ having $G_D$ as dual graph; it consists of 12 of the 27 lines on a smooth cubic $Y$, and precisely $E_1,\ldots,E_6, C_1,\ldots, C_6$, with the notation of Example \ref{ex:27}. 

Since $G_D$ is $5$-regular and triangle-free, by picking the intersection points of $D$ we get a set $S$ of $30$ distinct points such that any line of $D$ passes through exactly $5$ points of $S$.
Next, choose 4 non-coplanar points $x,y,v,w$ outside the cubic, and consider the set $S' = S \cup \{x,y,v,w\}$. Since the linear system of quartics of $\PP^3$ has dimension $34$, we can find a quartic $Z$ passing through all points of $S'$. Since $Z$ contains $5$ points for each line of $D$, then it must contain  $D$. In addition, $Z$ cannot contain the cubic $Y$ (otherwise $Z$ would be $Y$ union a plane, against the choice of $x,y,v,w$). Then $Y\cap Z$ is a complete intersection of degree $12$ containing $D$; and the equality $D=Y\cap Z$ follows because $\deg D=12$. This is in accordance with Theorem \ref{thm:Main}, which states that $G_D$ is $(4+3-2)$-regular.
\end{example}

Contrarily to the previous examples, the graph $G_D$ has diameter $3>2$. This is somehow unexpected to us, so we would like to pose the following:

\begin{question}
Given a positive integer $d$, is there always some complete intersection line arrangement $X\subseteq \PP^3$ such that $G_X$ has diameter $d$?
\end{question}

Notice that a line arrangement as above should be the complete intersection of two surfaces of degree $\ge d$, by Lemma \ref{lem:diameter}.

\paragraph*{Higher-degree surfaces.} As mentioned in the introduction, the generic smooth surface of degree $d\geq 4$ contains no line. Furthermore, by a result of B. Segre \cite{Se}, any smooth surface of degree $d\geq 4$ cannot contain more than $(d-2)(11d-6)$ lines. Examples of smooth surfaces in $\mathbb{P}^3$ with many lines are those of equation
\[F(x_0,x_1,x_2,x_3)=\phi(x_0,x_1)-\psi(x_2,x_3),\]
where $\phi$ and $\psi$ are two arbitrary homogenous polynomials of degree $d$. 
As shown in \cite{BS}, in this case the maximal number of lines $N_d$ is given, for $d \ge 3$, by the formula

\[N_d 
= \left \{
\begin{array}{ll}
64  & \textrm{if }  d=4, \\
180 & \textrm{if }  d=6,\\
256 & \textrm{if }  d=8,\\
864 & \textrm{if }  d=12,\\
1600 & \textrm{if }  d=20,\\
3d^2 &\textrm{otherwise.} 
\end{array}
\right.
\]

The maximum is almost always achieved by the degree-$d$ Fermat surfaces, which contains exactly $3d^2$ lines; for $d=4$ the maximum is achieved by the Schur's quartic. Below we examine these two examples more closely.

\begin{example}[Schur's quartic]
Schur's quartic $\mathfrak{S}$ \cite{Schur} 
is the complex surface defined by the polynomial \[F(x_0,x_1,x_2,x_3)=x_0^4-x_0x_1^3-x_2^4+x_2x_3^3.\]
This is a particular example of polynomial of the type $\phi(x_0,x_1)-\psi(x_2,x_3)$, with $\psi = \phi$. Schur's quartic contains an arrangement $E$ of $64$ lines. We claim that $E$ is a complete intersection. 
To see this, we first compute explicitly the lines of $E$, by looking at the automorphisms of $\PP^1$ that fix the four roots of $\phi$ (cf.~the proof of \cite[Thm.~3.1]{BS}); such a group looks like $A_4$.
Choose the $4$ even permutations
\[a=\mathrm{id}, \quad 
b=\left(
\begin{array}{l}
1\ 2\ 3\ 4\\
3\ 4\ 1\ 2
\end{array} \right),
\quad
c=\left(
\begin{array}{l}
1\ 2\ 3\ 4\\
4\ 3\ 2\ 1
\end{array} \right),
\quad
d=\left(
\begin{array}{l}
1\ 2\ 3\ 4\\
2\ 1\ 4\ 3
\end{array} \right).
\] 
To these $4$ permutations correspond $4$ quadrics that, up to reordering the roots of $\phi$, are defined~by
\[\begin{array}{l}
Q_a=x_0x_3-x_1x_2,\\
Q_b=x_0(2\zeta^2x_2+x_3)-x_1(-x_2+\zeta x_3),\\
Q_c=x_0(2x_2+x_3)-x_1(-x_2+x_3),\\
Q_d=x_0(-2\zeta^2x_2-\zeta x_3)-x_1(\zeta x_2-x_3),
\end{array}\] 
where $\zeta$ is a primitive $3$rd root of unity.

Each quadric contains $8$ lines of $\mathfrak{S}$. Moreover, no line of $\mathfrak{S}$ belongs to two of the above four quadrics since $|\{a(i),b(i),c(i),d(i)\}|=4$ for $i=1,\ldots,4$. Hence, the union of the $4$ quadrics contains exactly $32$ of the lines of $\mathfrak{S}$. Let us call $E_1$ the arrangement formed by these lines.
Being $F$ irreducible, the ideal $I_1=(F,\ Q_a  Q_b  Q_c Q_d)$ is a complete intersection; 
and since it contains $32$ lines, for degree reasons it follows that $I_1$ is itself the ideal of $E_1$.

It remains to find the ideal $I_2$ of the arrangement $E_2$ formed by the remaining $32$ lines. For this we have to consider the remaining $8$ automorphisms of $\PP^1$ that fix the roots of $\phi$, corresponding to the $8$ permutations in $A_4$ different from $a,b,c,d$. Each automorphism yields a quadric having $8$ lines in common with $\mathfrak{S}$: $4$ in the first ruling (already contained in $E_1$), and $4$ in the second ruling. The latter, ranging over the $8$ quadrics, are the remaining $32$ lines. With the help of {Macaulay2} \cite{M2}, we verified that $I_2$ is generated by $F$ and by the irreducible octic surface 
\[\begin{array}{ll}P=&80x_0^3x_1^5+x_1^8+64x_0^2x_1^2x_2^4+64x_0^3x_1x_2^3x_3
+8x_1^4x_2^3x_3+\\
&64x_2^6x_3^2-64x_0^2x_1^2x_2x_3^3+
8x_0^3x_1x_3^4+x_1^4x_3^4+16x_2^3x_3^5+x_3^8 \end{array}
\]
Hence, $I_2$ is also a complete intersection. It follows that the ideal $I$ defining the whole arrangement $E$ of the $64$ lines is a complete intersection given by
\[I=(F,\; P \,Q_a \, Q_b \, Q_c \, Q_d).\]
Accordingly with Theorem \ref{thm:Main}, the dual graph of $E$ is $18$-regular. 
Moreover, $E$ can be partitioned into two subarrangements $E_1$ and $E_2$ that are themselves complete intersections; in particular, in $E$ each line of $E_1$ meets exactly $4+8-2=10$ lines of $E_1$ and $18-10=8$ lines of $E_2$. Symmetrically, each line of $E_2$ meets ten lines from $E_2$ and eight from $E_1$.

Very recently, Degtyarev, Itenberg and Sertoz proved that, up to projective equivalence, there is a unique smooth quartic with $64$ lines over $\CC$ \cite{DIS}. As a consequence, $64$ lines on a smooth quartic over $\CC$ are always a complete intersection, and always satisfy the combinatorial properties outlined above.

\end{example}

\begin{example}[Lines on Fermat surfaces] \label{ex:Fermat}
Let $\mathfrak{F}_d\subseteq \PP^3$ be the Fermat surface of degree $d$ given by the equation $x_0^d+x_1^d+x_2^d+x_3^d=0$. Let us denote by $\zeta_1, \zeta_2, \ldots, \zeta_d$ the $d$-th roots of unity. Let us also fix a primitive $2d$-th root of unity $\omega$. Adapting the notations in \cite{SSV}, let us list the lines of $\mathfrak{F}_d$ as follows (both $i$ and $j$ range from $1$ to $d$):
\[
\begin{array}{lll}
l_1(i,j) &=&\{[\lambda,\omega\zeta_i\lambda,\mu,\omega
\zeta_j\mu]\}, \\
l_2(i,j) &=&\{[\lambda,\mu,\omega\zeta_i\lambda,\omega
\zeta_j\mu]\},\\
l_3(i,j) &=&\{[\lambda,\mu,\omega
\zeta_j\mu,\omega\zeta_i\lambda]\}.
\end{array}
\]
This yields an arrangement $F$ of $3 \cdot d \cdot d = 3d^2$ lines. The dual graph $G_F$ consists of the following edges:
\begin{compactenum}
\item $\{l_a(i,j),l_a(h,k)\}$ iff $i=h$ or $j=k$, $a=1,2,3$;
\item $\{l_1(i,j),l_2(h,k)\}$ iff $i-j=h-k$ $\mod d$;
\item $\{l_1(i,j),l_3(h,k)\}$ iff $i+j=h-k$ $\mod d$;
\item $\{l_2(i,j),l_3(h,k)\}$ iff $i+j=h+k$ $\mod d $.
\end{compactenum}
We can partition the vertex set of $G_F$ into $3d$ copies of the complete graph $K_d$. In fact, if we fix $(a,i)\in \{1,2,3\}\times \{1,\ldots ,d\}$, the induced subgraph corresponding to the set of lines $F_{(a,i)}:=\{l_{a}(i,j),j=1,\ldots,d\}$ is complete. So each set of lines $F_{(a,i)}$ spans a plane $\pi_{a,i}$ in $\PP^3$. Hence, $F$ can be seen as the complete intersection of the surface $\mathfrak{F}_d$ with the union of the $3d$ planes $\pi_{a,i}$ just described. In particular,  one has  \[\reg X = d + 3d -1 = 4d-1.\]
One can see from the description above that $G_F$ is a $(4d-2)$-regular graph. This is consistent with Theorem \ref{thm:Main}: being $\mathfrak{F}_d$ smooth, the arrangement $F$ has only planar singularities.
Note also that the Fermat surface of degree $d=3$ is a smooth cubic. Since the dual graph does not depend on the chosen cubic, the dual graph of  ``27 lines on a smooth cubic'' is thus a particular case of the dual graph of ``$3d^2$ lines on the Fermat surface''.
\end{example}

\begin{example}[A 12-line arrangement different than Schl\"afli's]
For $d \ge 3$, let $\mathfrak{F}_d$ be the degree-$d$ Fermat surface and let $\pi_{a,i}$ be the $3d$ planes described in Example \ref{ex:Fermat}. For some integer $h \in \{1, \ldots, 3d\}$, choose $h$ of these $3d$ planes and let $\Pi_h$ be their union.
By the construction explained in Example \ref{ex:Fermat}, the intersection $F_h = \mathfrak{F}_d \cap \Pi_h$ is a subarrangement of $F$, consisting of exactly $hd$ lines. Clearly $F_h$ is a complete intersection  and the dual graph $G_{F_h}$ has regularity $h+d-2$. If we choose the degree-$3$ Fermat surface as smooth cubic, the ``Steiner set'' can be viewed as the case $h=3$, $d=3$ of this construction. Let us focus instead on the case $h=4$, $d=3$. This yields a $12$-line arrangements with $30$ intersection points that is  different than Schl\"{a}fli's double six. In fact, the graph $G_D$ of Example \ref{ex:Schlaefli} has diameter $3$ and it is bipartite, whereas $G_{F_4}$ has diameter $2$ and contains triangles. However, because of Theorem  \ref{thm:Main}, both graphs $G_D$ and $G_{F_4}$ are $5$-regular.
\end{example}

In many of the above examples, we considered the arrangement consisting of all the lines on some smooth surface in $\PP^3$, and noticed that the dual graph is regular. Unfortunately, this is not always the case, as shown by the examples in \cite{Ra}. However we would like to pose the following question:

\begin{question}
Let $d\geq 3$ and $X\subseteq \PP^3$ be a smooth surface of degree $d$ containing the maximum possible number of lines (among the smooth surfaces of degree $d$). Is it true that there is a natural number $N$ such that each of the lines meets exactly $N$ of the others?
\end{question}

As remarked in the above examples the answer is yes for $d=3$ and, over the complex numbers, for $d=4$. Furthermore by Theorem \ref{thm:Main}, the answer would be yes in general if the divisor of the lines was linearly equivalent to a multiple of the hyperplane section.

\section{The nerve complex}
Let $R$ be a standard graded ring with $R_0=\CC$. Let $\mm$ be its homogenous maximal ideal. Let $\pp_1, \ldots, \pp_s$ be the minimal primes of $R$. The \emph{nerve complex} or \emph{Lyubeznik complex} $\Lyu(R)$ is the simplicial complex on the vertices $1, \ldots, s$ described by 
\[\{i_1, \ldots, i_k\} \textrm{ is a face } \: \Longleftrightarrow \: \sqrt{\pp_{i_1} + \ldots + \pp_{i_k}} \neq \mm.\]  
Equivalently, if $X= \Proj R$ and $X_{1}, \ldots, X_{s}$ are its irreducible components, one could describe the nerve complex as follows:
\[\{i_1, \ldots, i_k\} \textrm{ is a face } \: \Longleftrightarrow \: X_{i_1} \cap \ldots \cap X_{i_k}  \textrm{  is nonempty}.\]  
It is a straightforward consequence of Borsuk's Nerve Lemma that any simplicial complex $\Delta$ is homotopy equivalent to $\Lyu(\CC[\Delta])$, where $\CC[\Delta]$ is the usual notation for the Stanley--Reisner ring of $\Delta$. However, $\Delta$ and $\Lyu(\CC[\Delta])$ are not homeomorphic in general: for example, when $\Delta$ is a simplex,  $\Lyu(\CC[\Delta])$ consists of a single point.

In 1962, Hartshorne \cite[Prop.~2.1]{Ha} showed that if $R$ is a standard graded ring and $\mathrm{depth} R \ge 2$, then $\Proj R$ is connected; or equivalently,  \[\widetilde{H}_0(\Lyu(R);\mathbb{C})=0.\]
Recently, Katzman, Lyubeznik and Zhang proved the following beautiful extension:

\begin{theorem}[\cite{KLZ}] If $R$ is a standard graded ring and $\mathrm{depth} R \ge 3$, then 
\[ \widetilde{H}_0(\Lyu(R);\mathbb{C})=\widetilde{H}_1(\Lyu(R);\mathbb{C})=0.\] 
\end{theorem}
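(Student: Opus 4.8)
The plan is to reduce both vanishing statements to the vanishing of low-degree sheaf cohomology on $X=\Proj R$, and then to feed this into a Mayer--Vietoris spectral sequence built from the irreducible components of $X$. First I rephrase the hypothesis: for a standard graded $\CC$-algebra, $\operatorname{depth} R\ge 3$ is equivalent to $H^i_\mm(R)=0$ for $i\le 2$. Via the standard graded exact sequence $0\to H^0_\mm(R)\to R\to \bigoplus_k H^0(X,\O_X(k))\to H^1_\mm(R)\to 0$ together with the isomorphisms $H^i_\mm(R)\cong\bigoplus_{k\in\Z} H^{i-1}(X,\O_X(k))$ for $i\ge 2$, the depth hypothesis yields, in degree $0$,
\[H^0(X,\O_X)=R_0=\CC \qquad\text{and}\qquad H^1(X,\O_X)\cong [H^2_\mm(R)]_0=0.\]
In particular $X$ is connected; this is exactly Hartshorne's depth-$2$ statement already recalled, which disposes of $\widetilde H_0(\Lyu(R))$. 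The content left to extract is that the depth-$3$ gain $H^1(X,\O_X)=0$ forces $\widetilde H_1(\Lyu(R);\CC)=0$.

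The geometric input is packaged by the Mayer--Vietoris spectral sequence of the closed cover of $X$ by its components $X_i=\Proj(R/\pp_i)$. For a subset $\sigma\subseteq\{1,\dots,s\}$ put $X_\sigma=\bigcap_{i\in\sigma}X_i=\Proj\!\big(R/\!\sum_{i\in\sigma}\pp_i\big)$, which is nonempty precisely when $\sigma\in\Lyu(R)$. The complex of structure sheaves
\[0\to \O_{X_{\mathrm{red}}}\to \bigoplus_i \O_{X_i}\to \bigoplus_{i<j}\O_{X_{ij}}\to\cdots\]
is a resolution of $\O_{X_{\mathrm{red}}}$, and taking hypercohomology gives a spectral sequence
\[E_1^{p,q}=\bigoplus_{|\sigma|=p+1}H^q(X_\sigma,\O_{X_\sigma})\ \Longrightarrow\ H^{p+q}(X_{\mathrm{red}},\O_{X_{\mathrm{red}}}).\]
Since it lives in the quadrant $p,q\ge 0$, the two corner terms carry no nontrivial differentials, so $E_2^{0,0}=E_\infty^{0,0}$ and $E_2^{1,0}=E_\infty^{1,0}$; the latter is the bottom step of the filtration on $H^1(X_{\mathrm{red}},\O)$, hence a subobject of it.

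It then remains to match the bottom row $q=0$ with the combinatorics of $\Lyu(R)$. Each $X_i$ is irreducible, so $H^0(X_i,\O_{X_i})=\CC$ and $E_1^{0,0}=\CC^s$; the differential $d_1$ records which pairs of components meet, so $E_2^{0,0}=\CC$ exactly when the nerve is connected, re-deriving $\widetilde H_0(\Lyu)=0$. When every nonempty $X_\sigma$ is connected and reduced, the whole bottom row is the augmented simplicial cochain complex of $\Lyu(R)$, giving $E_2^{p,0}=\widetilde H^p(\Lyu(R);\CC)$; in particular $\widetilde H^1(\Lyu(R);\CC)=E_\infty^{1,0}$ embeds into $H^1(X_{\mathrm{red}},\O)$, reducing the theorem (after passing from cohomology back to homology by universal coefficients over $\CC$) to the vanishing of this last group. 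The clean combinatorial target may alternatively be isolated through Hochster's formula $\widetilde H^{1}(\Lyu;\CC)\cong [H^2_\mm(\CC[\Lyu])]_0$.

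The main obstacle is precisely the passage I glossed over. In general the pairwise and triple intersections $X_\sigma$ need be neither connected nor reduced, so $H^0(X_\sigma,\O_{X_\sigma})$ may exceed $\CC$ and the bottom row is a priori strictly larger than the simplicial cochain complex of $\Lyu(R)$; consequently $E_2^{1,0}$ need not equal $\widetilde H^1(\Lyu)$ on the nose. Intertwined with this is a reduced-versus-nonreduced gap: the spectral sequence computes the cohomology of $X_{\mathrm{red}}$, whereas the depth hypothesis controls $H^\bullet_\mm(R)$, and $\operatorname{depth}R\ge 3$ does \emph{not} force $\operatorname{depth}R_{\mathrm{red}}\ge 3$, so $H^1(X_{\mathrm{red}},\O)$ is not immediately zero. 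Resolving both points — showing that the surplus contributions of disconnected intersections cancel in the range $p\le 1$, and that only the combinatorial, reduced part of $H^1$ can survive in $E^{1,0}_\infty$ — is exactly where the full strength of $\operatorname{depth}R\ge 3$ (rather than merely $\ge 2$) must be spent, and I expect this bookkeeping to constitute the bulk of the proof.
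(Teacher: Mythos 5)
Your outline cannot be completed as written, because its foundational step is false: the complex $0\to \O_{X_{\mathrm{red}}}\to \bigoplus_i \O_{X_i}\to \bigoplus_{i<j}\O_{X_{ij}}\to\cdots$, with $X_\sigma$ the \emph{scheme-theoretic} intersections, is in general \emph{not} a resolution of $\O_{X_{\mathrm{red}}}$ — exactness fails already at the first spot, and no connectedness or reducedness hypothesis on the $X_\sigma$ rescues it. Concretely, take the three concurrent lines $x=0$, $y=0$, $x=y$ in the plane, with ideals $\pp_1=(x)$, $\pp_2=(y)$, $\pp_3=(x-y)$: all pairwise sums equal $(x,y)$, so the triple $(0,0,x)$ lies in $\ker\bigl(\bigoplus_i \O_{X_i}\to\bigoplus_{i<j}\O_{X_{ij}}\bigr)$, yet a global lift $f$ would satisfy $f\in(x)\cap(y)=(xy)$ and $f-x\in(x-y)$, forcing $x\,h(x,x)=1$ for some polynomial $h$, which is impossible. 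Globally this shows up as a contradiction with your claimed abutment: for the plane cubic $X\colon xy(x-y)=0$ in $\PP^2$, every $X_\sigma$ is the single reduced point $[0:0:1]$, all components are $\PP^1$'s, so your $E_1$-page has bottom row $\CC^3\to\CC^3\to\CC$ (the cochain complex of a $2$-simplex) and vanishing $q=1$ row, predicting $H^1(X_{\mathrm{red}},\O)=0$; but $X$ has arithmetic genus $1$, so $H^1(X,\O_X)=\CC$. The spectral sequence you build converges to the hypercohomology of the Mayer--Vietoris complex, not to $H^\bullet(X_{\mathrm{red}},\O)$, and with it the link to the depth hypothesis evaporates. (Incidentally, the first difficulty you flag is not actually a problem: since $C^0_{\mathrm{nerve}}=C^0_{\mathrm{MV}}=\CC^s$, the inclusion of the nerve cochain complex into the bottom row always induces an injection $\widetilde{H}^1(\Lyu(R);\CC)\hookrightarrow E_2^{1,0}$, with no connectedness needed for the higher $X_\sigma$.)

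The second difficulty you flag — that $\operatorname{depth} R\ge 3$ gives no handle on $H^1(X_{\mathrm{red}},\O)$ — is genuine and, in my view, fatal for \emph{any} route through coherent cohomology of the reduction; it is not bookkeeping to be absorbed, and the paper does not attempt it. The statement is quoted from Katzman--Lyubeznik--Zhang, and the paper records exactly how their proof spends the depth hypothesis: by the third author's theorem, $\operatorname{depth} R\ge 3$ forces the cohomological dimension of the open complement $\PP^n\setminus\Proj R$ to be at most $n-2$. This invariant depends only on the underlying topological space, hence is blind to nilpotents — that is how the reduced-versus-nonreduced gap is bridged. One then deduces the \emph{topological} vanishing $\widetilde{H}^1(X;\CC)=0$ (via \'etale comparison in general), and feeds it into the closed-cover spectral sequence for the \emph{constant} sheaf $\CC$, which, unlike your coherent one, is legitimate: stalkwise the complex $0\to\CC_X\to\bigoplus_i\CC_{X_i}\to\cdots$ is the augmented cochain complex of the simplex on the components through the given point, hence exact. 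With constant coefficients the injection $\widetilde{H}^1(\Lyu(R);\CC)\hookrightarrow H^1(X;\CC)$ goes through verbatim and the theorem follows. So your skeleton is salvageable by replacing $\O_X$ with $\CC$ and replacing the (unavailable) coherent input $H^1(X_{\mathrm{red}},\O)=0$ with the cohomological-dimension argument; as it stands, both the resolution claim and the final vanishing are missing.
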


It is not clear whether the obvious generalization to higher depth is true: the proof given in \cite{KLZ} relies on the fact that, because $\mathrm{depth} R\geq 3$, the cohomological dimension of the complement of $\Proj R$ in $\PP^n$ (embedded by the linear system $R_1$) is no more than $n-2$, as proven by the third author \cite{Va}. However, the latter fact is false for higher depths.

The theorem of \cite{KLZ} naturally suggests the question: what about the converse?

\begin{question}\label{q:1}
Given a simplicial complex $\Delta$ with vanishing $0$-th and $1$-st homology, is it the Lyubeznik complex of a ring of depth $\ge 3$?
\end{question}

Without any ``depth request'', the answer is known. Every simplicial complex $\Delta$ is the nerve of some ring: in fact, even of some Stanley--Reisner ring. Here is a simple construction illustrating this fact that was suggested to us  
by Alessio D'Al\`i.

\begin{lemma}\label{lem:ale}
Let $\Delta$ be a simplicial complex on $n$ vertices and dimension $d-1$. 
Let $N$ be the number of facets of $\Delta$. Let 
$M$ be  the maximum, taken over all vertices $v$ of $\Delta$, of the number of facets containing $v$. 
There is a simplicial complex $\Gamma$ on $s$ vertices, with $N \le s \le n +N$, and of dimension either $M-1$ or $M$, such that 
\[ \Lyu(\mathbb{C}[\Gamma]) = \Delta.\]
(Or in the words of combinatorialists: The nerve of $\Gamma$ coincides with $\Delta$.)
\end{lemma}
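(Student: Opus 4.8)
The plan is to first reinterpret the nerve complex of a Stanley--Reisner ring purely combinatorially, and then to reverse-engineer a complex $\Gamma$ whose nerve is the prescribed $\Delta$. For the Stanley--Reisner ring $\mathbb{C}[\Gamma]$, the minimal primes are exactly the ideals $\pp_F = (x_v : v \notin F)$, where $F$ ranges over the facets of $\Gamma$. For a collection of facets one computes $\pp_{F_{i_1}} + \cdots + \pp_{F_{i_k}} = (x_v : v \notin F_{i_1} \cap \cdots \cap F_{i_k})$, which is already prime, hence radical, and equals $\mm$ if and only if $F_{i_1} \cap \cdots \cap F_{i_k} = \emptyset$. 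Thus $\Lyu(\mathbb{C}[\Gamma])$ is precisely the nerve of the family of facets of $\Gamma$, a set of facets spanning a face exactly when they share a common vertex. The problem therefore reduces to producing a complex $\Gamma$ whose facets, indexed by the $n$ vertices of $\Delta$, realize the intersection pattern of the faces of $\Delta$.

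First I would build the ground set of $\Gamma$ from the facets of $\Delta$: introduce one vertex $w_G$ for each facet $G$ of $\Delta$ (giving $N$ vertices so far), and for each vertex $i$ of $\Delta$ set $F_i := \{w_G : i \in G\}$, the set of facets through $i$. Then $F_{i_1} \cap \cdots \cap F_{i_k}$ is the set of facets of $\Delta$ containing all of $i_1, \ldots, i_k$, which is nonempty exactly when $\{i_1, \ldots, i_k\}$ is a face of $\Delta$. Hence the nerve of the family $\{F_i\}$ is $\Delta$ itself.

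The hard part is guaranteeing that the $F_i$ really are the facets of $\Gamma$ — equivalently, that the $\pp_{F_i}$ really are the \emph{minimal} primes — since the recipe may yield $F_i \subseteq F_j$ (this happens precisely when every facet of $\Delta$ through $i$ also passes through $j$, including the degenerate case $F_i = F_j$), and then $\pp_{F_i} \supseteq \pp_{F_j}$ fails to be minimal and the vertex $i$ is lost from the nerve. I would repair this by adjoining to every $F_i$ a \emph{private} vertex $u_i$ belonging to no other facet, replacing $F_i$ by $F_i \cup \{u_i\}$. A private vertex lies in a unique facet, so it never appears in an intersection of two or more distinct $F_i$'s; thus all intersections with $k \ge 2$ are unchanged, while each singleton $\{i\}$ stays a face because $F_i \neq \emptyset$. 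Since every modified facet now carries a vertex absent from all the others, the $F_i$ are pairwise incomparable, so they are exactly the facets of the complex $\Gamma$ they generate, and the previous paragraph yields $\Lyu(\mathbb{C}[\Gamma]) = \Delta$.

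Finally I would read off the two numerical bounds. The vertex set of $\Gamma$ consists of the $N$ vertices $w_G$ together with at most one private vertex per vertex of $\Delta$. Adding a private vertex to every facet gives the extreme $s = n+N$ and $\dim \Gamma = M$, since then $|F_i|$ equals the number of facets of $\Delta$ through $i$ plus one, with maximum $M+1$. Conversely, a private vertex can be dropped from any $F_i$ that is already incomparable to all the others, which lowers both $s$ and the dimension; carried to its extreme (when the $F_i$ already form an antichain) this gives $s = N$ and $\dim \Gamma = M-1$. This produces the full ranges $N \le s \le n+N$ and $\dim \Gamma \in \{M-1, M\}$, completing the argument.
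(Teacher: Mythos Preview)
Your proof is correct and follows essentially the same construction as the paper: build the candidate facets of $\Gamma$ as the sets $A_i=\{j\in[N]:i\in F_j\}$ indexed by the vertices of $\Delta$, then repair any containments by adjoining a fresh ``private'' vertex $N+i$ (your $u_i$) so that the $A_i$ become an antichain and hence the facets of $\Gamma$. The only differences are expository: you spell out the identification of $\Lyu(\mathbb{C}[\Gamma])$ with the combinatorial nerve of the facet family and verify the bounds on $s$ and $\dim\Gamma$ explicitly, whereas the paper leaves these implicit.
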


\begin{proof}
Let $1, \ldots, n$ be the vertices of $\Delta$, and let $F_1, \ldots, F_N$ be its facets. 
Let $A_1, \ldots, A_n$ be subsets of $[N]$ defined as follows: 
\[ A_i = \{ j \in [N] \textrm{ s.t. }  i \in F_j\}.\]
These $A_i$'s might not be facets of some simplicial complex, because a priori it can happen that some $A_i$ is contained in some other $A_{i'}$. This can be fixed as follows: if $A_i$ is contained in some $A_{i'}$, with $i \ne i'$, we update the set $A_i$ by adding to it the integer $N+i$. 
Let now $\Gamma$ be the simplicial complex generated by the $A_1, \ldots, A_n$. By construction, the intersection of $A_{i_1}, \ldots, A_{i_k}$ is non-empty if and only if $\{i_1, \ldots, i_k\}$ is contained in a facet of $\Delta$. 
\end{proof}

If we try to use the construction above to gather information on the depth of $\Gamma$, however, we are doomed. In fact, from $\Gamma$ one cannot even recover the dual graph of~$\Delta$. Moreover, the dimension of $\Gamma$ can be arbitrarily larger (or also smaller) than $\dim \Delta$.

To bypass these difficulties and tackle Question \ref{q:1}, we need to introduce a more geometric construction.

\begin{theorem} \label{thm:Lyubeznik}
Let $\Delta$ be a $(d-1)$-dimensional simplicial complex $d\geq 2$. There exists a $d$-dimensional standard graded $\mathbb{C}$-algebra $R=R(\Delta)$ such that:
\begin{compactenum}[\rm (i)]
\item $\Lyu(R)=\Delta$;
\item the dual graph of $R$ is the 1-skeleton of $\Delta$.
\end{compactenum}
Furthermore, if $\widetilde{H}_0(\Delta;\mathbb{C})=0$, then one can choose $R$ such that $\mathrm{depth}(R) \ge 2$; and if $\widetilde{H}_0(\Delta;\mathbb{C})=\widetilde{H}_1(\Delta;\mathbb{C})=0$, then one can choose $R$ so that $\mathrm{depth}(R)\geq 3$.
\end{theorem}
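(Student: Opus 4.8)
The plan is to realize $\Delta$ as the nerve of an arrangement of equidimensional linear subspaces, and then to read off the depth of its coordinate ring from the reduced homology of $\Delta$ via a Mayer--Vietoris computation.

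First I would reduce the statement to a purely geometric one. I look for linear subspaces $\Lambda_1,\dots,\Lambda_s\subseteq\PP^N$, one for each vertex of $\Delta$, each of projective dimension $d-1$, such that
\[\bigcap_{i\in\sigma}\Lambda_i\neq\emptyset \;\iff\; \sigma\in\Delta, \qquad \dim(\Lambda_i\cap\Lambda_j)=d-2 \ \text{ whenever } \{i,j\}\in\Delta.\]
Granting such an arrangement, let $\pp_i\subseteq S:=\CC[x_0,\dots,x_N]$ be the (linear, hence prime) ideal of $\Lambda_i$, set $I=\bigcap_{i=1}^s\pp_i$ and $R=R(\Delta):=S/I$. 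Since each $\Lambda_i$ is a $(d-1)$-plane, $\dim R=d$. Part (i) is then immediate from the geometric description of the nerve recalled in the text; and part (ii) follows because non-faces of size two give empty intersections while edges give intersections of dimension $\dim\Proj R-1=d-2$, so the dual graph $G_R$ has exactly the edges of $\Delta$, i.e.\ it is the $1$-skeleton.

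To build the planes I would argue by genericity in a large ambient $\PP^N$. For each facet $F$ of $\Delta$ I reserve a point $P_F$, and I let $\Lambda_i$ be the span of $\{P_F:i\in F\}$ padded by auxiliary points up to dimension exactly $d-1$, with all the chosen data taken in general position subject only to the prescribed incidences. By construction the subspaces indexed by a common face share the relevant $P_F$, so the required intersections are nonempty; genericity of everything else should force that no unprescribed collection meets, and that each edge yields an intersection of the expected codimension one. A useful sanity check is that the coordinate-hyperplane arrangement in $\PP^{d}$ already realizes the boundary of the $d$-simplex this way. Making the three requirements hold \emph{simultaneously} --- nonemptiness on faces, emptiness off $\Delta$, and exact codimension-one incidence on edges, all while keeping every component and every intersection linear --- is the technical heart of the argument, and the step I expect to be the main obstacle, since the incidence constraints are not independent and a naive parameter count does not by itself guarantee the absence of unwanted coincidences.

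Finally I would obtain the depth statements from the linearity of the pieces. For every subset $\sigma\subseteq[s]$ the sum $\sum_{i\in\sigma}\pp_i$ is generated by linear forms, hence defines either a linear space (when $\sigma\in\Delta$) or the irrelevant ideal (when $\sigma\notin\Delta$); in either case $S/\!\sum_{i\in\sigma}\pp_i$ is Cohen--Macaulay, so its local cohomology is concentrated in a single degree. Feeding this into the Mayer--Vietoris spectral sequence computing the local cohomology $H^\bullet_{\mm}(R)$ of a union of subspaces (the same machine used by the third author in the reverse direction), all contributions in low homological degree collapse onto the reduced simplicial cochain complex of the nerve $\Lyu(R)=\Delta$. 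One then reads off $H^0_{\mm}(R)=0$, that $H^1_{\mm}(R)$ vanishes if and only if $\widetilde H^0(\Delta;\CC)=0$, and that $H^2_{\mm}(R)$ vanishes if and only if $\widetilde H^1(\Delta;\CC)=0$. Hence $\widetilde H_0(\Delta;\CC)=0$ forces $\operatorname{depth}R\ge 2$ (recovering Hartshorne's connectedness), while $\widetilde H_0(\Delta;\CC)=\widetilde H_1(\Delta;\CC)=0$ forces $\operatorname{depth}R\ge 3$, as claimed.
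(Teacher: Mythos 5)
Your reduction to a \emph{linear} subspace arrangement cannot work in the generality required, and the obstruction is concrete. Suppose $d\ge 3$ and $\Delta$ contains three vertices $i,j,k$ that are pairwise joined by edges but do not span a $2$-face (such $\Delta$ certainly exist in every dimension $d-1\ge 2$). Your condition (ii) forces $\Lambda_i\cap\Lambda_j$ and $\Lambda_i\cap\Lambda_k$ to be $(d-2)$-planes inside the $(d-1)$-plane $\Lambda_i$; two hyperplanes of $\Lambda_i\cong\PP^{d-1}$ meet in dimension at least $d-3\ge 0$, so $\Lambda_i\cap\Lambda_j\cap\Lambda_k\supseteq(\Lambda_i\cap\Lambda_j)\cap(\Lambda_i\cap\Lambda_k)\ne\emptyset$, and the nerve acquires the forbidden face $\{i,j,k\}$. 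So ``nonemptiness on faces, emptiness off $\Delta$, and exact codimension-one incidence on edges'' are not merely hard to arrange simultaneously by genericity --- they are mutually contradictory for linear components. (Your explicit construction also fails for a more mundane reason: with all reserved and padding points in general position one gets $\Lambda_i\cap\Lambda_j=\mathrm{span}\{P_F: i,j\in F\}$, whose dimension is the number of common facets minus one, not $d-2$.) This is exactly why the paper does not use linear components: it starts from $n$ generic hyperplanes in $\PP^d$, whose nerve is the full simplex, and then blows up along the intersections indexed by the minimal non-faces of $\Delta$ (largest first), taking strict transforms at each stage. The blow-ups destroy precisely the unwanted incidences while preserving the codimension-one pairwise intersections, at the cost of making the components non-linear (they remain smooth rational varieties meeting with simple normal crossings).

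Your second step would also need to be replaced, since once the components are not linear the sums $\sum_{i\in\sigma}\pp_i$ are no longer linear ideals and the Mayer--Vietoris collapse you describe is not available in the clean form you state (even for genuine subspace arrangements, the relevant spectral sequence is governed by the intersection poset rather than by the nerve alone, so the asserted equivalences for $H^1_{\mm}$ and $H^2_{\mm}$ would need justification). The paper instead gets the depth bounds topologically: by Bj\"orner's extended nerve lemma (applicable because the components and their intersections are simply connected), $H^i(Y;\CC)\cong H^i(\Delta;\CC)$ for $i=0,1$; the simple normal crossing condition gives surjectivity of $H^i(Y;\CC)\to H^i(Y;\O_Y)$; Kodaira vanishing kills the negative graded pieces of $H^1_{\mm}(R)$ and $H^2_{\mm}(R)$, and a sufficiently high Veronese kills the positive ones, leaving only the degree-zero pieces, which are controlled by $\widetilde H_0(\Delta;\CC)$ and $\widetilde H_1(\Delta;\CC)$. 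You would need to either adopt this route or find a different non-linear realization for which your homological bookkeeping can be made precise.
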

\begin{proof}
Let us fix a total order $\sigma_1, \ldots, \sigma_r$ of the minimal \textit{non}-faces of $\Delta$, so that 
\[\dim \sigma_i < \dim \sigma_j \Rightarrow i > j ,\]
that is, higher-dimensional non-faces are listed first.

Let us choose an arrangement of $n$ hyperplanes in $\PP^d$  (where $n$ is the number of vertices of $\Delta$) such that for each subset $A\subseteq [n]=\{1,\ldots ,n\}$
\[\dim \; \bigcap_{i \in A} H_i = \max\{-1,d - |A|\}.\]
This condition is obviously met if the hyperplanes are generic. 
Let now $X(\sigma_1)$ be the blow-up of $\PP^d$ along $\bigcap_{j \in \sigma_1} H_j$. For each $j\in[n]$, let  $H_j(\sigma_1)$ be the strict transform of $H_j$ in $X(\sigma_1)$. 
Recursively, for each $ i \in \{2, \ldots, r\}$ we denote
\begin{compactitem}
\item by $X(\sigma_{i})$ the blow-up of  $X(\sigma_{i-1})$  along $\bigcap_{j \in \sigma_{i}} H_j (\sigma_{i-1})$, and 
\item by $H_j(\sigma_{i})$ the strict transform of $H_j (\sigma_{i-1})$ in $X(\sigma_{i})$, for each $j\in[n]$.
\end{compactitem}
Consider $Y:=\bigcup_{j=1}^nH_j(\sigma_r)$. Being a blow-up of a projective scheme, by the blow-up lemma there exists a $d$-dimensional standard graded $\CC$-algebra $R$ such that $\Proj R=Y$. By construction, $\Lyu(R)=\Delta$. Furthermore, the dual graph of such an $R$ is the 1-skeleton of $\Delta$. This shows items (i) and (ii). Note also that the irreducible components of $Y$ and their nonempty intersections are smooth rational varieties. 

We are now left with the last two claims. First of all we observe  that $\Lyu(R)=\Delta$ is the nerve complex of $Y$ associated to the covering $\{H_j(\sigma_r)\}_{j=1,\ldots ,n}$. A smooth rational variety over $\mathbb{C}$ is simply connected (see \cite[Corollary 4.18]{debarre}), then by the extended version of the nerve lemma \cite[Theorem 6]{Bj}, we have
\begin{equation}\label{bjorner}
H^i(Y;\mathbb{C})\cong H^i(\Delta;\mathbb{C}) \ \ \mbox{ for } i=0,1.
\end{equation}
By the genericity of the hyperplanes $H_j\subseteq \PP^d$, $Y$ has only simple normal crossing singularities. So the natural maps
\[H^i(Y;\mathbb{C}) \: \longrightarrow \: H^i(Y;\O_Y)\]
are surjective for all $i$. Furthermore we have the Kodaira vanishing (e.g. \cite[9.12]{Ko}):
\[H^i(Y;\O_Y(k))=0 \ \ \forall \ k<0, \ i <d-1.\]
Denoting with $\mm$ the unique homogeneous maximal ideal of $R$, this translates into
\[H_{\mm}^1(R)_k=H_{\mm}^2(R)_k=0 \ \ \forall \ k< 0.\]
Obviously we can choose $R$ such that $H_{\mm}^0(R)=0$. Finally, by replacing $R$ with a high enough Veronese, we will also have that $H_{\mm}^1(R)_k=H_{\mm}^2(R)_k=0$ for all $k>0$. Since $\dim_{\CC}H_{\mm}^1(R)_0=\dim_{\CC} H^0(Y;\mathbb{C})-1$ and $\dim_{\CC}H_{\mm}^2(R)_0=\dim_{\CC} H^1(Y;\mathbb{C})$, the conclusion follows by \eqref{bjorner}.
\end{proof}

\noindent {\bf Acknowledgments}. 
We are indebted to Winfried Bruns, for a question to the third author that eventually inspired Theorem \ref{thm:Main}; to Bernd Sturmfels, for drawing our attention to the 27 lines; to Daniele Faenzi, for suggesting applications to $K3$ surfaces; to Alessio D'Al\`i, for Lemma \ref{lem:ale}. The second and third author wish also to thank the math department of the University of Miami for the warm hospitality in February 2016.


\end{document}